\definecolor{purple}{rgb}{0.8,0.12,0.8}
\definecolor{orange}{rgb}{1.0,0.7,0.0}
\definecolor{pink}{rgb}{1,0.5,0.8}
\definecolor{blackg}{rgb}{0.1,0.25,0.1}
\definecolor{ForestGreen}{cmyk}{0.91,0,0.88,0.42}
\definecolor{Turquoise}{cmyk}{0.85,0,0.20,0}
\newcommand{\blambda}{\boldsymbol{\lambda}}
\newcommand{\bmu}{\boldsymbol{\mu}}
\newcommand{\bemptyset}{\boldsymbol{\emptyset}}
\newtheorem{Th}{Theorem}[section]
\newtheorem{Cor}[Th]{Corollary}
\newtheorem{Prop}[Th]{Proposition}
\newtheorem{Def-Prop}[Th]{Definition-Proposition}
\theoremstyle{definition}
\newtheorem{Def}[Th]{Definition}
\newtheorem{Exa}[Th]{Example}
\theoremstyle{remark}
\newtheorem{Rem}[Th]{Remark}
\begin{document}

\title{\mbox{Constructible representations and basic sets in type $B$}}
\author{Nicolas Jacon}
\begin{abstract}
We  study the parametrizations of simple modules provided by the theory of basic sets 
 for all finite Weyl groups. In the case of type $B_n$, we  show the existence of basic sets
  for the matrices of constructible representations. Then we study  bijections between the various basic sets 
  and show that they are controlled by the  matrices of the constructible representations.

\end{abstract}

\maketitle
\section{Introduction}

  One of the main problems in the modular representation theory of Hecke algebras of finite Weyl groups is to find ``good" 
   parametrizations of the set of simple modules.
   There is a natural way to solve that problem by studying the associated decomposition matrices. 
   In fact, 
    in characteristic zero,  using these matrices, 
     it is possible to prove the existence of certain indexing sets called
      ``basic sets"   which are in natural bijection with  the set of simple modules for the Hecke algebra.  
      Once we have the existence of these sets, it is another problem 
      to have an explicit characterization of them. This has been achieved in type $A_{n-1}$ by Dipper and James \cite{DJ0}, in type $B_n$ combining works by M. Chlouveraki, M.Geck, and the author  \cite{CJ},  \cite{GJBn}, in type $D_n$ by M. Geck \cite{GeckD} and the author \cite{JDn} 
  and for the exceptional types by Geck,  M\"uller and Lux, \cite{geckexcep},  \cite{gecklux}, \cite{muller}, \cite{muller2}. Importantly, these results remain valid in positive characteristic under the 
        assumptions of certain Lusztig conjectures. We refer to \cite{Gsurvey} for a survey of these results.

  In the case of type $B_n$, the theory of basic sets provides several natural ways to label the same set of simple modules. In this paper,
   we are mainly  interested on the connections between these various basic sets. 
  First, we show the existence of analogues of  basic sets for other types of representation introduced by Lusztig: the constructible representations (see \cite{lu}). 
           In fact, given the matrix of the constructible characters for a choice of parameters (this includes the case where these parameters are negative), 
     we show the existence of two different associated basic sets. As a consequence, we obtain two natural ways to parametrize
       the constructible characters, extending the work of Lusztig \cite{lu1}, \cite{lu2}, \cite{lu}.  In addition, we describe the bijection between these two basic sets. 
       All these results use as a crucial tool the works of Lusztig, Leclerc and Miyachi \cite{LM} and the combinatorics developed therein.

       The last part of the paper is devoted to the study of the various bijections between the basic sets in type $B_n$. 
       It turns out that
        the  bipartitions labelling these sets are difficult to describe in general (we only have in principle  a recursive description of them).
          Then, in the same spirit as in \cite{JL}, we show the existence of an action of the affine extended symmetric group $\widehat{\mathfrak{S}}_2$ on these basic sets. 
           We observe the two following remarkable facts:
       \begin{itemize}
       \item there is an easy description of the basic sets lying in  a fundamental domain associated with the above action, 
       \item  the action of $\widehat{\mathfrak{S}}_2$ on the set of basic sets can be explicitly described combining our previous results with results obtained in \cite{JL}.
       \end{itemize}
    It is then possible to describe the basic sets as orbits of the elements of the fundamental domain under this action. 
       Finally,  we remark  that this action is in some sense controlled 
    by the matrices of constructible representations. In particular the bijections between the various basic sets can be essentially read through these matrices. 
       We end the paper with an explanation of this phenomenon. \\

\section{Decomposition matrices for Hecke algebras}
Let  $(W,S)$ be a finite Weyl group. We  assume that we have a decomposition $S=S_+ \sqcup S_-$ where no elements of $S_+$ is conjugate to an element
 of $S_-$.  Let $\phi:S \to \mathbb{Z}$ be such that 
 $$\phi (s)=\phi (s')\text{ if }(s,s')\in S^2_{+}\text{ and }\phi (s)=\phi (s')\text{ if }(s,s')\in S^2_{-} \ \ (\star)$$
  Let $q$ 
  be an indeterminate and choose $q^{1/2}$ a root of $q$.  
 We then  have an associated 
  Iwahori-Hecke algebra $\mathcal{H}(W,S,\phi)$ over $A=\mathbb{Z}[q^{1/2},q^{-1/2}]$. The basis is given by $\{T_w\}_{w\in W}$ and the multiplication is determined by the following
   rules:
   $$\left\{\begin{array}{ll}
   T_w T_{w'}=T_{ww'} & \textrm{if }l(ww')=l(w)+l(w')\\
   (T_s -q^{\phi(s)})(T_s+1)=0
   \end{array}\right.$$
In this section, we study the representation theory of these algebras in both the semisimple and the modular case and give extensions of some definitions and properties  
 which were previously  only known when $\phi (S)\subset \mathbb{N}$

\subsection{Decomposition matrices}\label{int}   Let $K$ be the field of fractions of $A$. Then by \cite[\S 9.3.5]{GP}, 
  the algebra $\mathcal{H}_K(W,S,\phi):=K\otimes_A \mathcal{H}(W,S,\phi)$
   is split semisimple and by Tits' deformation theorem, we have a canonical bijection between $\operatorname{Irr}( \mathcal{H}_K(W,S,\phi))$ 
    and $\operatorname{Irr}(W)$. Let $\Lambda$ is an indexing set for $\operatorname{Irr}(W)$:
    $$\operatorname{Irr}( W)=\left\{ E^{\blambda}\ |\ \blambda\in \Lambda\right\}.$$
    We then have:
    $$\operatorname{Irr}( \mathcal{H}_K(W,S,\phi))=\left\{ V_{\phi}^{\blambda}\ |\ \blambda\in \Lambda\right\}$$

Let $k$ be a field and $\xi\in k^{\times}$ be an element which has
a square root in $k^{\times}$. Then there is a ring homomorphism $\theta:A\to k$ such
that $\theta (q )=\xi$. Considering $k$ as an $A$-module via $\theta$, we set
$\mathcal{H}_k(W,S,\phi):=k\otimes_{A}\mathcal{H}_A(W,S,\phi)$.  As noted above, we 
 have a canonical way to parametrize the simple modules for $\mathcal{H}_K(W,S,\phi)$. It is also desirable 
  to obtain a ``good" parametrization of the simple $\mathcal{H}_k(W,S,\phi)$-modules. As $\mathcal{H}_k(W,S,\phi)$ is 
   not semisimple in general, Tits' deformation theorem cannot be applied. However, following \cite[\S 4.10]{Gsurvey}, one can use the associated decomposition
    matrix to solve that problem.  Let  $\blambda\in \Lambda$ and let 
 $$\begin{array}{cccl}
  \rho^{\blambda}  \colon &\mathcal{H}_K(W,S,\phi) &\rightarrow &M_d(K)\\
    & T_w & \mapsto & (a_{ij} (T_w))_{1\leq i,j\leq d}
    \end{array}$$
   be a matrix representation 
  affording the module $V_{\phi}^{\blambda}\in \operatorname{Irr}( \mathcal{H}_K(W,S,\phi))$ of dimension $d$. 
   The ideal  $\mathfrak{p}=\ker(\theta)$ is a prime ideal in $A$ and the localization 
$A_{\mathfrak{p}}$ is a regular local ring of Krull dimension $\leq 2$. Hence, by Du--Parshall--Scott 
\cite[\S 1.1.1]{DPS1}, we can assume that $\rho^{\blambda}$ satisfies the condition
\[ \rho^{\blambda}(T_w) \in M_d  (A_{\mathfrak{p}}) \qquad \mbox{for all $w\in W$}.\]
Now, $\theta$  extends to a ring homomorphism $\theta_{\mathfrak{p}}
\colon A_{\mathfrak{p}} \rightarrow k$. Applying $\theta_{\mathfrak{p}}$, we obtain a 
representation
 $$\begin{array}{llll} 
 \rho^{\blambda}_{k,\xi}:& \mathcal{H}_k(W,S,\phi)&\rightarrow& M_d  (k), \\
&T_w &\mapsto &(\theta_{\mathfrak{p}}(a_{ij}(T_w)))_{1\leq i,j\leq d}.
\end{array}$$
This representation may no longer be irreducible. For any $M \in
\operatorname{Irr}(      \mathcal{H}_k(W,S,\phi)  )$, let $[   V_{\phi}^{\blambda}:M]$ be the multiplicity of $M$ as a
composition factor of the $ \mathcal{H}_k(W,S,\phi)$-module affording $\rho^{\blambda}_{k,\xi}$.
This is well defined by  \cite[\S 1.1.2]{DPS1}. 
Thus, we obtain a well-defined matrix
\[ D^{\phi}_{\theta}=\left([ V_{\phi}^{\blambda} :M]\right)_{\blambda\in \Lambda,M \in \operatorname{Irr}( \mathcal{H}_k(W,S,\phi)) }\]
which is called the { decomposition matrix} associated with $\theta$. Let 
$R(\mathcal{H}_K(W,S,\phi))$ (resp. $R(\mathcal{H}_k(W,S,\phi))$) be the Grothendieck group
 of finitely generated $\mathcal{H}_K(W,S,\phi)$-modules (resp. $\mathcal{H}_k (W,S,\phi)$-modules). 
  It is generated by the classes $[U]$ of the simple  $\mathcal{H}_K(W,S,\phi)$-modules (resp. $\mathcal{H}_k (W,S,\phi)$-modules) $U$.
   Then we obtained a well defined  decomposition map:
$$d^{\phi}_{\theta}:R(\mathcal{H}_K(W,S,\phi))\to R(\mathcal{H}_k(W,S,\phi))$$
such that for all $\lambda\in \Lambda$  we have:
$$d^{\phi}_{\theta}([V^{\blambda}_{\phi}])=\sum_{M\in \operatorname{Irr}(\mathcal{H}_k(W,S,\phi))} [ V_{\phi}^{\blambda} :M] [M]$$

The notion of basic sets of simple modules for Hecke algebras has first been considered by Geck in \cite{Gcells}.
 The definition depends on the decomposition matrix and has been originally 
   given in the case where $\phi$ is constant and positive and then in the case where $\phi$ is positive in \cite{Gsurvey}.
    Using the above discussion, we will be able to generalize these notions to all possible $\phi$.

\subsection{Basic sets}\label{basic}

In this section, we adopt the following notations. Let $\phi:S\to \mathbb{Z}$ be a map satisfying 
 $$\phi (s)=\phi (s')\text{ if }(s,s')\in S^2_{+}\text{ and }\phi (s)=\phi (s')\text{ if }(s,s')\in S^2_{-}  \qquad   (\star).$$
  We denote by 
  $|\phi|:S\to \mathbb{Z}$ the map such that $|\phi| (s)=|\phi (s)|$ for all $s\in S$. Note that
   this map satisfy $(\star)$. Set:
   $$S^{-}:=\left\{ s\in S\ |\ \phi (s)<0\right\}.$$
   Let $\varepsilon: W\to \mathbb{Q}^{\times}$ be the one dimensional representation of $W$ such that $\varepsilon (s)=1$
   if $s\in S^+$ and $\varepsilon (s)=-1$ if $s\in S^-$. For $\blambda\in \Lambda$, the module $(E^{\blambda})^{\varepsilon}$
    remains simple and  
   we define $\blambda^{\varepsilon}\in \Lambda$ 
    such that $E^{\blambda^{\varepsilon}}\simeq  (E^{\blambda})^{\varepsilon}$.

We turn to the definition of basic sets associated with a specialization of the Hecke algebra.

\begin{Def}\label{basique}
We say that $\mathcal{H}(W,S,\phi)$  admits a basic set $\mathcal{B}(\phi)\subset \Lambda$ with respect to
  $\theta:A\to k$ and to a map $\alpha^{\phi}:\Lambda\to \mathbb{Q}$ if and only if:
  \begin{enumerate}
\item   For all $M\in\operatorname{Irr}( \mathcal{H}_k(W,S,\phi))$ there exists $\lambda_M\in \mathcal{B}(\phi)$ such that
 $$[V_{\phi}^{\blambda_M},M] =1\textrm{ and }\alpha^{\phi}(\bmu)>\alpha^{\phi}(\blambda_M)\textrm{ if }[V_{\phi}^{\bmu},M] \neq 0$$
\item The map 
 $$\begin{array}{cll}
 \operatorname{Irr}( \mathcal{H}_k(W,S,\phi))   &\to& \mathcal{B}(\phi)\\
M &\mapsto& \lambda_M
\end{array}$$
is a bijection \end{enumerate}
\end{Def}
Assume that $\mathcal{H}(W,S,\phi)$   admits a basic set $\mathcal{B}(\phi)\subset \Lambda$ with respect to
  $\theta$ and to a map $\alpha^{\phi}:\Lambda\to \mathbb{Q}$. This implies that the associated decomposition matrix has a lower 
  triangular shape with one along the diagonal for a ``good" order on  $\Lambda$ induced by the map $\alpha^{\phi}$. Hence, 
   it gives a way to label 
  $\operatorname{Irr}( \mathcal{H}_k(W,S,\phi))$.

 It is now natural to ask if these basic sets always exist.
The question 
 has been considered in \cite{GR}, \cite{Gcells}, \cite{Gsurvey} (see \cite{GJlivre} for 
 a complete survey on this theory)  and in  \cite{CJ} (where the question of existence 
 of basic sets in characteristic $0$ and for any weight function is complete), 

 The first step is to define the canonical map 
  $a^{\phi}:\Lambda\to \mathbb{N}$ which will play the role of $\alpha^{\phi}$. This can be  done using the
   symmetric algebra structure of  $\mathcal{H}(W,S,\phi)$ 
 We define a linear map $\tau \colon \mathcal{H}(W,S,\phi) \rightarrow A$ by 
\[ \tau(T_1)=1 \qquad \mbox{and} \qquad \tau(T_w)=0 \quad 
\mbox{for $w\neq 1$}.\]
Then one can show that $\tau$ is a trace function and we have 
\[ \tau(T_wT_{w'})=\left\{\begin{array}{cl} \pi(w) & \qquad 
\mbox{if $w'=w^{-1}$},\\ 0 & \qquad \mbox{otherwise}.\end{array}\right.\]
This implies that $\mathcal{H}(W,S,\phi)$  is a symmetric algebra (see \cite[Ch. 7]{GP} for a study of the representation theory  of this type of algebras). The above trace form
 extends to a trace form $\tau_K \colon \mathcal{H}_K (W,S,\phi) \rightarrow K$. Now since  
  $ \mathcal{H}_K (W,S,\phi)$ is split semisimple, we have
  $$\tau_{K} (T_w)=\sum_{\blambda\in \Lambda} \frac{1}{c_{\blambda}} \textrm{trace}(T_w,V^{\blambda}_{\phi}),\qquad \textrm{ for all }w\in W,$$
  where $c_{\blambda}\in A$ is called the Schur element associated to $\blambda\in \Lambda$.  For all  $\blambda\in \Lambda$, we now have 
  $$c_{\blambda}=f_{\blambda} q^{-a^{\phi}_{\blambda}}+\textrm{ combination of higher powers of }q,$$
  where  $f_{\blambda}$ and $a^{\phi}_{\blambda}$ are both integers such that $f_{\blambda}>0$ and $a^{\phi}_{\blambda}\geq 0$ 
  (see \cite[Ch. 20]{GP} for details.) The map 
  $$\begin{array}{cccc}
  a^{\phi}:&\Lambda&\to &\mathbb{N}\\
   & \blambda & \mapsto & a^{\phi}_{\blambda}
  \end{array}$$
  is called the Lusztig $a$-function.
  
  In the next theorem, we need the following definition: we say that $k$ is good with respect to  $\mathcal{H}(W,S,\phi)$  if
   $f_{\blambda}1_k\neq 0$ for all $\blambda\in \Lambda$.  
The proof of the existence of basic set for Hecke algebras (in characretistic $0$) with respect to the $a$-function 
 has been given Geck and  Geck-Rouquier (see \cite{Gsurvey}) when$ \Phi$ is positive. 
  The following proposition obtained in \cite[Prop 2.5]{CJ} allows the  extension of the result for arbitrary $\Phi$.
   This result will also be crucial in the rest of the paper.
\begin{Prop}\label{cj}
{For all }$\blambda\in \Lambda$, we have:
 $$a^{\phi}( V^{\blambda}_{\phi})=a^{|\phi|} (V^{\blambda^{\varepsilon}}_{|\phi|})$$
 \end{Prop}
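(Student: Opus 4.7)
The plan is to construct an explicit $A$-algebra isomorphism $\sigma\colon\mathcal{H}(W,S,\phi)\to \mathcal{H}(W,S,|\phi|)$ that matches $V^\blambda_\phi$ with $V^{\blambda^\varepsilon}_{|\phi|}$ under pullback and that preserves the canonical trace; the desired identity will then fall out of the two Schur-element expansions of $\tau_K$. Denoting the standard generators of the target by $T_s^{|\phi|}$, I would set $\sigma(T_s)=T_s^{|\phi|}$ for $s\in S^+$ and $\sigma(T_s)=-q^{\phi(s)}\,T_s^{|\phi|}$ for $s\in S^-$. A direct verification shows that $Y:=-q^{\phi(s)}T_s^{|\phi|}$ satisfies $(Y-q^{\phi(s)})(Y+1)=0$, which is the quadratic relation satisfied by $T_s$ in $\mathcal{H}(W,S,\phi)$.

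To check that $\sigma$ respects the braid relations, I would use the hypothesis $(\star)$ in an essential way: if $s\in S^+$ and $t\in S^-$, then $s$ and $t$ are not conjugate in $W$, which forces $m_{st}$ to be even (otherwise $s$ and $t$ would be conjugate inside the dihedral subgroup they generate). Both sides of the braid identity $\underbrace{sts\cdots}_{m_{st}}=\underbrace{tst\cdots}_{m_{st}}$ then involve the same number of occurrences of $t$, so the scalar $-q^{\phi(t)}$ contributes the same factor on each side. The case $s,t\in S^-$ is handled by the fact that $\phi$ is constant on $S^-$, so the scalar attached to each $S^-$-generator is the same. An induction on length then yields the explicit formula
$$\sigma(T_w)=(-1)^{\ell_-(w)}\, q^{\phi_-(w)}\, T_w^{|\phi|},$$
where $\ell_-(w)$ (resp.\ $\phi_-(w)$) counts the number (resp.\ weighted sum) of $S^-$-generators in any reduced expression for $w$, well-defined by Matsumoto's theorem.

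Two consequences then follow quickly. First, $\tau^{|\phi|}\circ\sigma=\tau^{\phi}$, because both sides vanish on $T_w$ for $w\neq 1$ and take the value $1$ on $T_1$. Second, since $\sigma^{-1}(T_s^{|\phi|})$ specializes at $q=1$ to $s$ for $s\in S^+$ and to $-s$ for $s\in S^-$, the $\mathcal{H}_K(W,S,|\phi|)$-module $V^\blambda_\phi\circ\sigma^{-1}$ specializes to $E^\blambda\otimes\varepsilon\simeq E^{\blambda^\varepsilon}$. Tits' deformation theorem then identifies $V^\blambda_\phi\circ\sigma^{-1}\simeq V^{\blambda^\varepsilon}_{|\phi|}$, or equivalently $V^\bmu_{|\phi|}\circ\sigma\simeq V^{\bmu^\varepsilon}_\phi$.

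Combining these observations, the two expansions
$$\tau^\phi_K=\sum_{\blambda\in\Lambda}\frac{1}{c^\phi_\blambda}\chi^{V^\blambda_\phi}\qquad\text{and}\qquad \tau^{|\phi|}_K\circ\sigma=\sum_{\bmu\in\Lambda}\frac{1}{c^{|\phi|}_\bmu}\chi^{V^{\bmu^\varepsilon}_\phi}$$
must coincide term by term (after the substitution $\bmu=\blambda^\varepsilon$, using that $\varepsilon$ is an involution on $\Lambda$), giving $c^\phi_\blambda=c^{|\phi|}_{\blambda^\varepsilon}$ for every $\blambda\in\Lambda$. In particular the lowest powers of $q$ on the two sides agree, which is exactly the equality $a^\phi(V^\blambda_\phi)=a^{|\phi|}(V^{\blambda^\varepsilon}_{|\phi|})$. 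The most delicate point will be the braid-relation verification, where $(\star)$ has to be invoked in a case-by-case fashion; a secondary subtlety is that the specialization at $q=1$ should be carried out inside the local ring $A_\mathfrak{p}$ so that the application of Tits' deformation theorem is rigorous.
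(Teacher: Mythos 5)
Your argument is correct, and it is worth noting that the paper itself gives no proof of this proposition: it is imported wholesale from \cite[Prop.~2.5]{CJ}, where it is established in the wider setting of cyclotomic Hecke algebras essentially by working with the explicit (product) form of the Schur elements and tracking how they transform under a change of the weight function. Your route is a self-contained structural alternative: the assignment $T_s\mapsto T_s^{|\phi|}$ for $s\in S^+$ and $T_s\mapsto -q^{\phi(s)}T_s^{|\phi|}$ for $s\in S^-$ does satisfy the deformed quadratic relation (the eigenvalues $q^{-\phi(s)},-1$ of $T_s^{|\phi|}$ are sent to $-1,q^{\phi(s)}$), the braid relations survive precisely because $(\star)$ forces $m_{st}$ to be even across the partition $S=S_+\sqcup S_-$ and $\phi$ to be constant on each part (the same parity argument is what makes $\varepsilon$ itself well defined), and the resulting $A$-algebra isomorphism multiplies each $T_w$ by a unit, hence intertwines the symmetrizing traces. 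The specialization $q^{1/2}\mapsto 1$ then identifies $V^{\blambda}_{\phi}\circ\sigma^{-1}$ with $V^{\blambda^{\varepsilon}}_{|\phi|}$ via Tits' deformation theorem, and comparing the two Schur expansions of the same trace form yields the stronger identity $c^{\phi}_{\blambda}=c^{|\phi|}_{\blambda^{\varepsilon}}$ of Schur elements, from which the equality of $a$-invariants is immediate by reading off lowest $q$-powers. What your approach buys is uniformity and economy: it works for any finite Coxeter group equipped with a decomposition $S=S_+\sqcup S_-$ as in $(\star)$ and requires no case-by-case knowledge of the Schur elements; what the \cite{CJ} approach buys is greater generality (complex reflection groups, arbitrary cyclotomic specializations, and finer information on the essential hyperplanes governing the $a$-function). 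The two delicate points you flag, the parity analysis in the braid relations and carrying out the specialization inside $A_{\mathfrak{p}}$, are exactly the right ones, and both go through.
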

The theorem of existence becomes then the following:
\begin{Th}\label{main}
We keep the above notations. Assume  in addition that  Lusztig's conjectures {\bf P1-P15} in \cite[\S 14.2]{lu} hold
 and that $k$ is good with respect to  $\mathcal{H}(W,S,\phi)$. Then $\mathcal{H}(W,S,\phi)$ admits a basic set $\mathcal{B}(\phi)\subset \Lambda$ with respect to
  any specialization and to the map $a^{\phi}$,  the Lusztig $a$-function. This basic set is called the canonical basic set and it only depends on $e$ and $\phi$.
\end{Th}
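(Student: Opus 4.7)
The plan is to reduce the statement to the known positive-parameter case via the sign-twist $\varepsilon$, using Proposition~\ref{cj} as the bridge.

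First, I would invoke the result for positive weight functions. When $\phi$ is replaced by $|\phi|$, we are in a situation where the Geck/Geck--Rouquier arguments apply: under P1--P15 and the goodness hypothesis on $k$, the algebra $\mathcal{H}(W,S,|\phi|)$ admits a canonical basic set $\mathcal{B}(|\phi|)\subset \Lambda$ with respect to any specialization $\theta$ and to the Lusztig function $a^{|\phi|}$, and this basic set depends only on $e$ and $|\phi|$. Note that the map $|\phi|$ satisfies $(\star)$, so this is legitimate.

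Second, I would set up the precise dictionary between the representation theories of $\mathcal{H}(W,S,\phi)$ and $\mathcal{H}(W,S,|\phi|)$. The substitution $T_s \mapsto -q^{\phi(s)}T_s^{-1}$ for $s\in S^-$ (and $T_s\mapsto T_s$ for $s\in S^+$) yields an isomorphism, after extending scalars, that matches $V^{\blambda}_{\phi}$ with $V^{\blambda^{\varepsilon}}_{|\phi|}$ on the semisimple side. Because the sign twist is implemented by an $A_{\mathfrak{p}}$-linear rescaling, it preserves matrix representations over $A_{\mathfrak{p}}$ and descends through $\theta_{\mathfrak{p}}$; hence it induces a bijection
$$\iota:\operatorname{Irr}(\mathcal{H}_k(W,S,\phi)) \;\xrightarrow{\sim}\; \operatorname{Irr}(\mathcal{H}_k(W,S,|\phi|))$$
together with an identification of decomposition numbers
$$[V^{\blambda}_{\phi}:M] \;=\; [V^{\blambda^{\varepsilon}}_{|\phi|} : \iota(M)]$$
for every $\blambda\in \Lambda$ and every $M\in \operatorname{Irr}(\mathcal{H}_k(W,S,\phi))$. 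In other words, $D^{\phi}_{\theta}$ is obtained from $D^{|\phi|}_{\theta}$ by relabelling rows via $\blambda\mapsto \blambda^{\varepsilon}$ and columns via $\iota$.

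Third, I would combine this with Proposition~\ref{cj}, which says exactly that the $a$-function transforms compatibly with the sign twist: $a^{\phi}(\blambda) = a^{|\phi|}(\blambda^{\varepsilon})$. Consequently, the ordering of rows of $D^{\phi}_{\theta}$ imposed by $a^{\phi}$ is the pull-back under $\blambda\mapsto\blambda^{\varepsilon}$ of the ordering of rows of $D^{|\phi|}_{\theta}$ imposed by $a^{|\phi|}$. Since the latter exhibits a unitriangular shape on the basic set $\mathcal{B}(|\phi|)$, the former exhibits a unitriangular shape on
$$\mathcal{B}(\phi) \;:=\; \{\,\blambda\in \Lambda \;:\; \blambda^{\varepsilon}\in \mathcal{B}(|\phi|)\,\}.$$
The two axioms of Definition~\ref{basique} for $(\mathcal{B}(\phi),a^{\phi})$ follow directly from the corresponding axioms for $(\mathcal{B}(|\phi|),a^{|\phi|})$ through $\iota$ and the involution $\blambda\mapsto\blambda^{\varepsilon}$. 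Dependence only on $e$ and $\phi$ is inherited from the analogous property of $\mathcal{B}(|\phi|)$.

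The main obstacle is the passage described in the second paragraph: while the sign-twist bijection is transparent on the split semisimple algebra $\mathcal{H}_K(W,S,\phi)$, making the comparison of decomposition matrices precise requires keeping track of the representing matrices over the local ring $A_{\mathfrak{p}}$ and verifying that the twist commutes with reduction modulo $\mathfrak{p}$ and does not alter composition multiplicities. Once this compatibility is recorded (essentially already used in \cite{CJ}), the remainder of the argument is a formal combination of Proposition~\ref{cj} and the known positive-parameter theorem.
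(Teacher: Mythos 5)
Your reduction is exactly the argument the paper intends: the paper offers no separate proof of Theorem~\ref{main}, saying only that the positive-parameter case is due to Geck and Geck--Rouquier and that Proposition~\ref{cj} (i.e.\ \cite[Prop.~2.5]{CJ}) ``allows the extension of the result for arbitrary $\phi$'', which is precisely the transport of $\mathcal{B}(|\phi|)$ through the sign twist that you spell out. One small correction: the isomorphism $\mathcal{H}(W,S,\phi)\simeq\mathcal{H}(W,S,|\phi|)$ sends $T_s$ (for $s\in S^-$) to $-q^{\phi(s)}$ times the corresponding generator of $\mathcal{H}(W,S,|\phi|)$ --- this swaps the eigenvalues $q^{\phi(s)}$ and $-1$ and hence specializes to the $\varepsilon$-twist at $q=1$ --- whereas $T_s\mapsto -q^{\phi(s)}T_s^{-1}$ preserves the eigenvalues and is merely an automorphism of $\mathcal{H}(W,S,\phi)$; this slip does not affect the rest of your argument.
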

\begin{Rem}  Lusztig's conjectures are known to hold in the following cases:
\begin{enumerate}
\item For all finite Weyl group, in the so called ``equal parameter case", that is when there exists $a\in \mathbb{N}$ such that $\phi (s)=a$ for all $s\in S$ by Lusztig (see \cite[Ch. 15]{lu}.)
\item In type $B_n$, in the so called ``asymptotic case" by the works of  Bonnaf\'e-Iancu \cite{BI}, Bonnaf\'e \cite{B2}, Geck \cite{Gconj} and Geck-Iancu \cite{GI}.
\end{enumerate}
\end{Rem}

In fact, the results in \cite{CJ} show that  the canonical basic set  $\mathcal{B}(\phi)\subset \Lambda$ can 
 be deduced from      $\mathcal{B}(|\phi|)$.  We illustrate this fact in type $B_n$ which is our main centre of interest in this paper. 
 Hence, let $W$ be a Weyl group of type $B_n$.
\begin{center}
\begin{picture}(250,30)
\put(  3, 08){$B_n$}
\put( 40, 08){\circle{10}}
\put( 44, 05){\line(1,0){33}}
\put( 44, 11){\line(1,0){33}}
\put( 81, 08){\circle{10}}
\put( 86, 08){\line(1,0){29}}
\put(120, 08){\circle{10}}
\put(125, 08){\line(1,0){20}}
\put(155, 05){$\cdot$}
\put(165, 05){$\cdot$}
\put(175, 05){$\cdot$}
\put(185, 08){\line(1,0){20}}
\put(210, 08){\circle{10}}
\put( 37, 20){$t$}
\put( 76, 20){$s_1$}
\put(116, 20){$s_2$}
\put(203, 20){$s_{n-1}$}
\end{picture}
\end{center}
In this case, $\Lambda$ can be defined to be the set $\Pi_n^2$  of bipartitions of rank $n$.  
 In the following, it will be useful to introduce a ``more generic" Hecke algebra than the one defined
 in the introduction of this section.
 
Let $V$ and $v$ be indeterminates and consider the 
generic Hecke algebra  $\mathcal{H}(\{V,v\})$ of type $B_n$ over $\mathbb{Z}[V^{\pm 1/2},v^{\pm 1/2}]$ with presentation as follows:
$$\begin{array}{rl}
(T_0-V)(T_0+1)=0& \\
(T_i-v)(T_i+1)=0& \textrm{if } i=1,...,n-1
\end{array}$$
Let $K$ be the field of fractions of $A$. We set 
$$\operatorname{Irr} (\mathcal{H}_K (\{V,v\}))=\{V^{\blambda}\ |\ \blambda\in \Pi^2_n\}.$$
    and $\Lambda=\Pi^2_n$.

We assume that we have a specialization $\theta:\mathbb{Z}[V^{\pm 1/2},v^{\pm 1/2}]\to k$ where $k$ is a field. By
 results of Dipper and James \cite[Th. 4.17]{DJ}, one can restrict ourselves to the  following case. 
 We  assume that  $\theta (V)=-q^{da}$ and $\theta (v)=q^a$ for $(a,b)\in \mathbb{N}^2$ and  for $q\in k^{\times}$. 
The resulting algebra   $\mathcal{H}_{k}(\{q^a,-q^{ad}\})$ has a presentation as follows:
 $$\begin{array}{rl}
(T_0+q^{da})(T_0+1)=0& \\
(T_i-q^a)(T_i+1)=0& \textrm{if } i=1,...,n-1.
\end{array}$$
It is in general a non semisimple algebra and as in \S \ref{int}, we have an associated decomposition matrix.
$$D=([V^{\blambda}:M])_{\blambda\in \Pi^2_n,M\in \operatorname{Irr}(\mathcal{H}_{k}(\{q^a,-q^{ad}\}))}$$
By a deep theorem of Ariki \cite[Thm 14.49]{Alivre}, this matrix is nothing but the (evaluation at $v=1$ of the) 
 matrix of the canonical basis for the irreducible highest weight $\mathcal{U}_v(\widehat{\mathfrak{sl}_e})$-module 
 with weight $\Lambda_d+\Lambda_{0}$ (where the $\Lambda_i$ with $i\in \mathbb{Z}/e\mathbb{Z}$ denote the fundamental weights).
 
Applying \cite{CJ}  to type $B_n$ leads the following proposition. 
 \begin{Prop}\label{B} Let $(W,S)$ be the Weyl group of type $B_n$. 
 We assume that we have a map $\phi :S=\{t,s_1,...,s_{n}\}\to \mathbb{Z}$  satisfying $(\star)$. We set 
 $|\phi| :S=\{t,s_1,...,s_{n}\}\to \mathbb{Z}$  such that $|\phi|(s)= |\phi(s)|$ for all $s\in S$. Then, for 
 all specialization $\theta$, we have 
 $$\mathcal{B}(\phi) =\left\{ (\lambda^0,\lambda^1)^{\varepsilon}\ |\  (\lambda^0,\lambda^1)\in \mathcal{B}(|\phi|)\right\}$$ 
 where
 \begin{enumerate}
 \item $(\lambda^0,\lambda^1)^{\varepsilon}=(\lambda^1,\lambda^0)$ if $S^+=\{s_1,...,s_{n-1}\}$ and $S^-=\{t\}$,
  \item $(\lambda^0,\lambda^1)^{\varepsilon}=({\lambda^1} ',{\lambda^0 }')$ if $S^+=\emptyset $ and $S^-=\{s_1,...,s_{n-1},t\}$,
   \item $(\lambda^0,\lambda^1)^{\varepsilon}=({\lambda^0} ',{\lambda^1} ')$ if $S^+=\{t\}$ and $S^-=\{s_1,...,s_{n-1}\}$,
 \end{enumerate}
 \end{Prop}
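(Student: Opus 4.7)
\noindent\emph{Proof plan for Proposition \ref{B}.} The plan is to combine Proposition \ref{cj} with Theorem \ref{main} applied to the positive weight function $|\phi|$, using the twist by the linear character $\varepsilon$ to transfer the basic set from $|\phi|$ back to $\phi$.

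First, I would set up the $\varepsilon$-twist at the level of Hecke algebras. For $s\in S^{-}$ one has $\phi(s)=-|\phi(s)|$, and rescaling each such generator $T_s$ (equivalently, tensoring modules by $\varepsilon$ and adjusting the action accordingly) yields a correspondence between simple $\mathcal{H}_K(W,S,\phi)$-modules and simple $\mathcal{H}_K(W,S,|\phi|)$-modules which, at the level of Tits' deformation labels, sends $V^{\blambda}_{\phi}\leftrightarrow V^{\blambda^{\varepsilon}}_{|\phi|}$ (the classical limit $q\to 1$ on the $\phi$-side yields $E^{\blambda}$, and its $\varepsilon$-twist is $E^{\blambda^{\varepsilon}}$ by definition). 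I would then check that this correspondence respects the integral structure over $A_{\mathfrak{p}}$, so that it induces a bijection $M\leftrightarrow M^{\varepsilon}$ on simple $\mathcal{H}_k$-modules satisfying
\[ [V^{\blambda}_{\phi}:M^{\varepsilon}]=[V^{\blambda^{\varepsilon}}_{|\phi|}:M] \]
for each $\blambda\in\Lambda$. This identification of decomposition matrices, up to the row-permutation $\blambda\mapsto\blambda^{\varepsilon}$, is the core content of the analysis carried out in \cite{CJ}.

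Second, I would transfer the basic set. Theorem \ref{main} applied to $|\phi|$ provides $\mathcal{B}(|\phi|)$ with the usual triangular property relative to $a^{|\phi|}$: to each simple $\mathcal{H}_k(W,S,|\phi|)$-module $M$ is attached a unique $\blambda_M\in\mathcal{B}(|\phi|)$ with $[V^{\blambda_M}_{|\phi|}:M]=1$ and $a^{|\phi|}(\bmu)>a^{|\phi|}(\blambda_M)$ for every other $\bmu$ occurring in $M$. Combining this with Step~1 and with the identity $a^{\phi}(V^{\blambda}_{\phi})=a^{|\phi|}(V^{\blambda^{\varepsilon}}_{|\phi|})$ of Proposition \ref{cj}, the triangular condition centred at $\blambda_M$ for $M$ transports verbatim to the triangular condition centred at $(\blambda_M)^{\varepsilon}$ for $M^{\varepsilon}$ in the decomposition matrix of $\mathcal{H}(W,S,\phi)$. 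Hence $\mathcal{B}(\phi)=\{\blambda^{\varepsilon}\,:\,\blambda\in\mathcal{B}(|\phi|)\}$ for every specialization $\theta$.

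Third, I would make the involution $\blambda\mapsto\blambda^{\varepsilon}$ explicit in type $B_n$. The Weyl group $W(B_n)=(\mathbb{Z}/2\mathbb{Z})^{n}\rtimes\mathfrak{S}_{n}$ has exactly three non-trivial linear characters, and their effect on an irreducible $E^{(\lambda^0,\lambda^1)}$ is classical: the character $(t\mapsto -1,\, s_i\mapsto 1)$ exchanges the two components, giving $(\lambda^0,\lambda^1)\mapsto(\lambda^1,\lambda^0)$; the character $(t\mapsto 1,\, s_i\mapsto -1)$ transposes each component, giving $(\lambda^0,\lambda^1)\mapsto((\lambda^0)',(\lambda^1)')$; the total sign character does both, giving $(\lambda^0,\lambda^1)\mapsto((\lambda^1)',(\lambda^0)')$. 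Matching each of these with the three prescribed choices of $(S^+,S^-)$ in the statement yields the three stated formulas. The main obstacle is Step~1: making the $\varepsilon$-twist correspondence precise at the integral, and hence modular, level rather than merely in characteristic zero. This is the technical heart of \cite{CJ}; once it is in hand, the remainder is bookkeeping with Proposition \ref{cj} and with the classical action of linear characters on bipartitions.
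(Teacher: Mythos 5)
Your proposal is correct and follows the same route the paper takes: the paper gives no independent argument for Proposition \ref{B}, deducing it directly from \cite{CJ} via the $\varepsilon$-twist of decomposition matrices and the $a$-function identity of Proposition \ref{cj}, exactly as you outline. Your explicit matching of the three linear characters of $W(B_n)$ with the swap/transpose operations on bipartitions agrees with the statement, so nothing further is needed.
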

 From now, we will denote $\kappa (\lambda^0,\lambda^1):=(\lambda^1,\lambda^0)$.




\subsection{Constructible representations}  
From the above definitions and the results in \cite{B}, it will be easy to extend known results  of constructible representations as defined by
 Lusztig \cite[\S 22.1]{lu}. 
 Let $(W,S)$ be a Weyl group and let  $\phi$ be  a map $\phi:S \to \mathbb{Z}$ satisfying $(\star)$.
Let $I\subset S$ and let $(W_I,I)$ be the corresponding parabolic
subgroup. Let  $\phi_I$ be the restriction of 
 $\phi$ to $I$.  Each simple $\mathbb{C}[W]$-module $E^{\blambda}$  can be seen as a specialization of a simple 
$\mathcal{H}(W,S,\phi)$-module $V^{\blambda}_{\phi}$. 
 By \S \ref{basic},  each simple $\mathbb{C}[W]$-module  
 $U$ comes equipped with  
an associated invariant $a^{\phi}(U)$ depending on the choice of $\phi$.  
Let $U$ be a simple $\mathbb{C}[W_I]$-module. Then we can  uniquely write:
$$\textrm{Ind}_I^S (U)=U_{(0)}\oplus U_{(1)} \oplus ....$$
where for any integer $i$
$$U_{(i)}=\bigoplus_{V} [\textrm{Ind}_I^S (U):V]V\ (\textrm{sum over all $V\in \operatorname{Irr}(\mathbb{C}[W])$ such that $a^{\phi} (V)=i$}).$$
Then the ${\mathbf J}$-induction of a simple  $\mathbb{C}[W_I]$-module $U$ is 
$$\mathbf{J}_I^S (U)=U_{(a^{\phi_I}(U))}   $$
Using this ``truncated" induction, the constructible representations with respect to $\phi$ 
 are defined inductively in the following way:
\begin{enumerate}
\item If $W=\{ 1 \}$, only the trivial module is constructible.
\item If $W \neq \{ 1 \}$, the set of
constructible $\mathbb{C}[W]$-modules consists of the $\mathbb{C}[W]$-modules of the form 
\[
{\mathbf{J}}^S_I  (V) 
\quad{\mbox{or}}\quad 
{\mathrm{sgn}} \otimes {\mathbf{J}}^S_I (V) ,
\]
where ${\mathrm{sgn}}$ is the sign representation of $W$,
 and $I$ some proper subset of $S$.
\end{enumerate}
One can define an analogue of the decomposition matrix for this type of representations: the constructible matrix $D^{\phi}_{\textrm{cons}}$ which is defined as follows.
\begin{itemize}
\item The rows are labelled by $\Lambda$,
\item the coefficients in a fixed column give the expansion of the corresponding constructible representation in terms of the irreducible ones.
\end{itemize}

\begin{Prop}\label{changecons}
Let $\phi:S\to \mathbb{Z}$ be a map satisfying $(\star)$. 
Keeping the above notations $\bigoplus_{\blambda\in \Lambda} \alpha_{\blambda} V_{\phi}^{\blambda}$ is a constructible $\mathbb{C}[W]$-module  with respect to $\phi$ 
 if and only if  $\bigoplus_{\blambda\in \Lambda} \alpha_{\blambda} V^{\blambda^{\varepsilon}}_{|\phi|}$  is a constructible $\mathbb{C}[W]$-module  with respect to $|\phi| $ 
\end{Prop}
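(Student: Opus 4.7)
Since $V^{\blambda}_{\phi}\simeq E^{\blambda}$ and $V^{\blambda^{\varepsilon}}_{|\phi|}\simeq E^{\blambda^{\varepsilon}} = E^{\blambda}\otimes \varepsilon$ as $\mathbb{C}[W]$-modules, the assertion amounts to: a $\mathbb{C}[W]$-module $X$ is $\phi$-constructible if and only if $X\otimes \varepsilon$ is $|\phi|$-constructible. I would prove this by induction on $|W|$, the crux being that $\mathbf{J}$-induction intertwines the twist by $\varepsilon$ with the replacement of $\phi$ by $|\phi|$.

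The main step is to establish the identity
\[
  \mathbf{J}_I^{S,\phi}(V)\otimes \varepsilon \;=\; \mathbf{J}_I^{S,|\phi|}\bigl(V\otimes \varepsilon|_{W_I}\bigr)
\]
for every proper $I\subsetneq S$ and every $\mathbb{C}[W_I]$-module $V$. Two ingredients combine to give this. First, the projection formula $\operatorname{Ind}_I^S(V\otimes \varepsilon|_{W_I}) = \operatorname{Ind}_I^S(V)\otimes \varepsilon$ holds because $\varepsilon$ is a one-dimensional character of $W$ extending $\varepsilon|_{W_I}$. Second, Proposition \ref{cj}, read at the $\mathbb{C}[W]$-module level, says $a^{\phi}(U) = a^{|\phi|}(U\otimes \varepsilon)$ for every simple $U$, and similarly in $W_I$. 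The second ingredient implies that twisting by $\varepsilon$ carries the $a^{\phi}$-graded decomposition of $\operatorname{Ind}_I^S(V)$ onto the $a^{|\phi|}$-graded decomposition of $\operatorname{Ind}_I^S(V\otimes \varepsilon|_{W_I})$ block by block, and in particular matches the distinguished $\mathbf{J}$-piece (index $a^{\phi_I}(V) = a^{|\phi_I|}(V\otimes \varepsilon|_{W_I})$) on both sides.

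The induction is then formal. The base case $W=\{1\}$ is trivial. For the inductive step, a $\phi$-constructible $X$ has the form $\mathbf{J}_I^{S,\phi}(V)$ or $\operatorname{sgn}\otimes \mathbf{J}_I^{S,\phi}(V)$ for some proper $I\subsetneq S$ and some $\phi_I$-constructible $\mathbb{C}[W_I]$-module $V$; by the inductive hypothesis, $V\otimes \varepsilon|_{W_I}$ is $|\phi_I|$-constructible, and the identity above shows that $X\otimes \varepsilon$ is of the form $\mathbf{J}_I^{S,|\phi|}(V\otimes \varepsilon|_{W_I})$ or $\operatorname{sgn}\otimes \mathbf{J}_I^{S,|\phi|}(V\otimes \varepsilon|_{W_I})$, hence $|\phi|$-constructible. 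The converse follows by rearranging the identity using $\varepsilon^{\otimes 2}=1$: if $Y = \mathbf{J}_I^{S,|\phi|}(W)$ with $W$ a $|\phi_I|$-constructible $\mathbb{C}[W_I]$-module, then setting $V = W\otimes \varepsilon|_{W_I}$ gives $Y\otimes \varepsilon = \mathbf{J}_I^{S,\phi}(V)$, and $V$ is $\phi_I$-constructible by induction; the sign-twisted case is identical.

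The main technical point will be the bookkeeping in the proof of the core identity, namely checking that $-\otimes \varepsilon$ preserves multiplicities in the induced module (the projection formula) and bijectively matches isotypic blocks of equal $a$-value across the two sides. Both are direct consequences of Proposition \ref{cj} once one has identified $V^{\blambda}_{\phi}$ with $E^{\blambda}$ and $V^{\blambda^{\varepsilon}}_{|\phi|}$ with $E^{\blambda}\otimes \varepsilon$; no additional combinatorics is needed.
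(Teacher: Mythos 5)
Your proof is correct and takes essentially the same route as the paper: the paper's entire proof is the one line ``the result follows directly from Prop.~\ref{cj} and the definition of the constructible representation,'' and your induction on $|W|$, with the intertwining identity $\mathbf{J}_I^{S,\phi}(V)\otimes\varepsilon=\mathbf{J}_I^{S,|\phi|}(V\otimes\varepsilon|_{W_I})$ obtained from the projection formula plus Prop.~\ref{cj}, is precisely that argument written out in full.
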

\begin{proof}
The result  follows directly from Prop. \ref{cj} and the definition of the constructible representation. 
\end{proof}

\begin{Rem}
If $\phi$ is positive, the blocks of the matrix $D^{\phi}_{\textrm{cons}}$ are known as the families of characters. 
 This notion plays an important role in the theory of reductive group. In fact, it is possible 
  to generalize the definition  of  families of characters to any map $\phi$ (ie. not necessary positive) and to  a wider class of algebras : the cyclotomic Hecke algebras, using the notion
   of Rouquier blocks. 
   The associated families have been studied by Chlouveraki \cite{Chlou}. When $W$ is a Weyl group,
      one can easily see that for any map $\phi$, 
      the blocks of the matrix $D^{\phi}_{\textrm{cons}}$ corresponds to the families of characters as given in \cite{Chlou}. 
\end{Rem}
\section{Bijections of basic sets}

 We now focus on the case of the Hecke algebras of type $B_n$.  
Before the study of the representation theory, we first give the formula for the Lusztig $a$-function in the case where 
 $\phi (t)=b\geq 0$ and $\phi (s_i)=a\geq 0$ for $i=1,...,n-1$.
\subsection{$a$-function in type $B_n$}\label{aBn}  We first introduce a combinatorial object which will be
 useful in the following: the shifted symbol of a bipartition.  
 Let $\beta=(\beta_1,...,\beta_k)$ be a sequence of strictly increasing integers and let $s$ be a rational nonnegative number. 
 We denote by $[s]$ the integer part of $s$. 
 We set
 $$\beta (s):=(s-[s],s-[s]+1,...,s-1,\beta_1+s,...,\beta_k+s).$$
Let $r\in \mathbb{Q}$ and let $\blambda=(\lambda^0,\lambda^1)$ be a bipartition of
 rank $n$. Let $h^{0}$ and $h^1$ be the heights of the
 partitions $\lambda^{0}$ and $\lambda^1$ and let $h$ be a positive integer such that
  $h\geq \textrm{max}(h^0,h^1)+1$. We say that $h$ is an admissible size for $\blambda$. 
We define  two sequences of strictly decreasing integers
$$\beta^{0}=(\lambda^{0}_{h}-h+h,...,\lambda^{0}_{j}-j+h,   ...,\lambda^{0}_1-1+h)$$
and 
$$\beta^{1}=(\lambda^{1}_{h}-h+h,...,\lambda^{1}_j-j+h,   ...,\lambda^{1}_1-1+h)$$
The {\it shifted $r$-symbol} of $\blambda$ of size $h$ is  then the family of sequence 
$${\bf B}_r (\blambda):=(B^{0},B^{1})$$
such that 
$$B^{0}=\left\{ \begin{array}{ll}
\beta^0 (r)& \textrm{if }r\geq 0\\ 
\beta^1 (-r) & \textrm{otherwise}.\end{array}\right. \qquad \textrm{and}\qquad 
B^{1}=\left\{  \begin{array}{ll}
\beta^0 & \textrm{if }r\leq 0\\ 
\beta^1  & \textrm{otherwise}.\end{array}\right.$$
The shifted symbol  of size $h$ is usually written as a two row tableaux as follows:
 $${\bf B}_r(\blambda):=\left(\begin{array}{c}
 B^{0}\\
 B^{1}
 \end{array}\right)$$
 Note that ${\bf B}_r (\blambda)={\bf B}_{-r} (\kappa(\blambda))$.
 
\begin{Exa}
Let $r=1/2$ and $\blambda=(2.1,3.3.2)$. We have $h^0=2$ and $h^1=3$. Then the associated shifted $r$-symbol of size $4$ is
 $$\left(\begin{array}{cccc}
 1/2& 3/2  &7/2& 11/2\\
 0&3&5&6
 \end{array}\right)$$
Let $r=-5/2$ and $\blambda=(2.1,3.3.2)$. We have $h^0=2$ and $h^1=3$ 
 then the associated shifted $r$-symbol of size $4$ is
 $$\left(\begin{array}{cccccc}
  1/2&3/2&  1/2&7/2&11/2&13/2\\
 0& 1  &3& 5& 
 \end{array}\right)$$
\end{Exa}

  Assume now that $\phi (t)=b\geq 0$ and $\phi (s_i)=a> 0$ for $i=1,...,n-1$. Let $\blambda\in \Lambda$
  and  let ${\bf B}_{b/a} (\blambda)$ be the shifted $b/a$ symbol of $\blambda$ of size $h$.  Let 
  $$\gamma_1\geq \gamma_2\geq ....\geq \gamma_{t},$$
be the elements of this symbol written in decreasing order (with repetition). Then write
$$a_1^{\phi,h} (\blambda)=\sum_{i=1}^t (i-1)\gamma_i.$$
Then by Lusztig \cite[\S 22.14]{lu}, we have:
$$a^{\phi}(\blambda)=a_1^{\phi,h} (\blambda)-a^{\phi,h}_1 (\bemptyset).$$
Regarding the above definition and the connection between Lusztig $a$-function when $b<0$ and $b>0$ given by Prop. \ref{cj}, we deduce:
\begin{Prop}
Assume that $\phi (t)=b$ and $\phi (s_i)=a\geq 0$ for $i=1,...,n-1$. Let $\blambda\in \Lambda$
  and  let ${\bf B}_{b/a} (\blambda)=(B^{0},B^{1})$ be the shifted $b/a$-symbol of $\blambda$.  Let 
  $$\gamma_1\geq \gamma_2\geq ....\geq \gamma_{t}$$
be the elements of this symbol written in decreasing order (with repetition).  We denote $\gamma_{b/a}^h (\blambda)=(\gamma_1,...,\gamma_t)$ the associated partition.  
We then write
$$a^{\phi}_1 (\blambda)=\sum_{i=1}^t (i-1)\gamma_i$$
Then  we have:
$$a^{b/a}(\blambda):=a^{\phi}(\blambda)=a^{\phi,h}_1 (\blambda)-a^{\phi,h}_1 (\bemptyset)$$
It does not depend on the size of the symbol.
\end{Prop}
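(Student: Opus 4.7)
The plan is to split the argument according to the sign of $b$ and reduce the negative-parameter case to the positive one via Proposition \ref{cj}.

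\smallskip

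\noindent\textbf{Case $b\geq 0$.} This is already the formula quoted from Lusztig \cite[\S 22.14]{lu} immediately before the statement of the proposition. Independence from the admissible size $h$ is automatic: the left-hand side $a^{\phi}(\blambda)$ is defined intrinsically from the Schur elements and does not involve $h$, so any two choices of $h$ must give the same value of $a_{1}^{\phi,h}(\blambda)-a_{1}^{\phi,h}(\bemptyset)$.

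\smallskip

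\noindent\textbf{Case $b<0$.} Here $|\phi|(t)=-b\geq 0$ and $S^{-}=\{t\}$, so Proposition \ref{B}(1) gives $\blambda^{\varepsilon}=\kappa(\blambda)$. Combining with Proposition \ref{cj} yields
$$a^{\phi}(\blambda)=a^{|\phi|}(\kappa(\blambda)).$$
Since $|\phi|$ has nonnegative parameters, the previous case applies to $\kappa(\blambda)$ with the ratio $-b/a\geq 0$:
$$a^{|\phi|}(\kappa(\blambda))=a_{1}^{|\phi|,h}(\kappa(\blambda))-a_{1}^{|\phi|,h}(\bemptyset).$$
The remaining task is to identify these two terms with their counterparts in the statement. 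This is where the remark ${\bf B}_{r}(\blambda)={\bf B}_{-r}(\kappa(\blambda))$ enters: applied with $r=b/a<0$, it gives that the shifted $(-b/a)$-symbol of $\kappa(\blambda)$ and the shifted $(b/a)$-symbol of $\blambda$ consist of exactly the same two rows, hence the same multiset of entries. Sorting those entries and computing $\sum_{i}(i-1)\gamma_{i}$ therefore produces the same number, that is $a_{1}^{|\phi|,h}(\kappa(\blambda))=a_{1}^{\phi,h}(\blambda)$. Since $\kappa(\bemptyset)=\bemptyset$, the same identity applied to the empty bipartition gives $a_{1}^{|\phi|,h}(\bemptyset)=a_{1}^{\phi,h}(\bemptyset)$. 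Putting everything together yields the claimed formula, and its independence from $h$ again follows from the fact that the left-hand side does not see $h$.

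\smallskip

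\noindent\textbf{Main obstacle.} The delicate point is the symbol identity ${\bf B}_{r}(\blambda)={\bf B}_{-r}(\kappa(\blambda))$. Although announced as an observation in the text, its verification requires matching the four sub-cases in the piecewise definitions of $B^{0}$ and $B^{1}$ once we simultaneously swap the components of $\blambda$ and change the sign of $r$. Concretely, for $r>0$ the row $B^{0}$ for ${\bf B}_{r}(\blambda)$ is $\beta^{0}(r)$ built from $\lambda^{0}$, while for ${\bf B}_{-r}(\kappa(\blambda))$ (with $-r<0$) the corresponding row is built from the second component of $\kappa(\blambda)$, namely $\lambda^{0}$, shifted by $-(-r)=r$; the analogous check handles $B^{1}$ and the case $r<0$. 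Once this case analysis is completed, the proposition follows cleanly from Case 1 and Proposition \ref{cj}.
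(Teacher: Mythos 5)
Your proof is correct and follows essentially the same route as the paper's: the case $b\geq 0$ is Lusztig's formula, and the case $b<0$ is reduced to it via Proposition \ref{cj} (with $\blambda^{\varepsilon}=\kappa(\blambda)$) together with the symbol identity ${\bf B}_{b/a}(\blambda)={\bf B}_{-b/a}(\kappa(\blambda))$. You merely spell out the verification of that identity, which the paper records as an observation.
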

\begin{proof}
When $b\geq 0$, this is a result of Lusztig. Assume that $b<0$, then we have by Prop. \ref{cj}, 
$$a^{b/a}(\blambda):=a^{-b/a}(\kappa(\blambda))$$
and ${\bf B}_{b/a} (\blambda)={\bf B}_{-b/a} (\kappa(\blambda))$ so the result follows 
\end{proof} 

The following result is a direct consequence of the formula of the $a$-function.  
\begin{Cor}\label{orde}
Assume that $\phi (t)=b$ and $\phi (s_i)=a\geq 0$ for $i=1,...,n-1$.  
Let $\blambda\in \Lambda$ and $\bmu\in \Lambda$. Let $h$ be an admissible size for 
$\blambda$ and $\bmu$. Assume that  $\gamma^h_{b/a} (\blambda)\rhd \gamma^h_{b/a} (\bmu)$ then we have
 $a^{b/a} (\blambda)<a^{b/a} (\bmu)$.

\end{Cor}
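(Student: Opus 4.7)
The plan is to reduce the statement to a standard fact about the behavior of the weighted sum $\sum(i-1)\gamma_i$ under the dominance order on partitions of a fixed size. Writing $\gamma := \gamma^h_{b/a}(\blambda)$ and $\delta := \gamma^h_{b/a}(\bmu)$, I first want to check that these are partitions of the same integer and that they have the same number of parts $t$, so that the dominance hypothesis $\gamma \rhd \delta$ is meaningful and the expressions $\sum_{i=1}^t (i-1)\gamma_i$ and $\sum_{i=1}^t (i-1)\delta_i$ are directly comparable.

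For the combinatorial bookkeeping, I would argue as follows. The shifted $b/a$-symbol has a fixed number of entries depending only on $h$ and $b/a$ (namely $2h$ plus $[|b/a|]$ extra entries coming from the shift), so $\gamma$ and $\delta$ have the same length $t$. Next, a direct computation of the sum of the entries of the shifted symbol of a bipartition $\bnu$ of rank $n$ using
\[
\sum_{j=1}^h (\nu^\varepsilon_j + h - j) = |\nu^\varepsilon| + \binom{h}{2}
\]
for each row $\varepsilon \in \{0,1\}$, together with the contribution from the extra shift entries, shows that this sum depends only on $n$, $h$ and $b/a$. In particular $|\gamma| = |\delta|$.

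Now I would invoke the classical Abel summation identity
\[
\sum_{i=1}^t (i-1)\gamma_i = (t-1)|\gamma| - \sum_{k=1}^{t-1}\Bigl(\sum_{i=1}^k \gamma_i\Bigr),
\]
and the analogous identity for $\delta$. Since $|\gamma| = |\delta|$, the assumption $\gamma \rhd \delta$ gives $\sum_{i=1}^k \gamma_i \geq \sum_{i=1}^k \delta_i$ for all $k$ with strict inequality for at least one $k$ (else $\gamma = \delta$, contradicting strictness of $\rhd$). Hence
\[
\sum_{i=1}^t (i-1)\gamma_i < \sum_{i=1}^t (i-1)\delta_i,
\]
that is $a^\phi_1(\blambda) < a^\phi_1(\bmu)$. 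Subtracting the common constant $a^{\phi,h}_1(\bemptyset)$ from both sides yields $a^{b/a}(\blambda) < a^{b/a}(\bmu)$.

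The only real subtlety, and the step I would write out most carefully, is the bookkeeping verifying that the shifted symbols of $\blambda$ and $\bmu$ contribute the same total number of entries and the same total sum; once this is established, the conclusion is a one-line consequence of Abel summation. Special care is needed when $b < 0$, but there one can either argue directly using the definition of $\mathbf{B}_{b/a}$ in that case, or invoke the identity $\mathbf{B}_{b/a}(\blambda) = \mathbf{B}_{-b/a}(\kappa(\blambda))$ together with Proposition \ref{cj} to reduce to the case $b \geq 0$ already treated.
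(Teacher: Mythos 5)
Your proof is correct and follows what the paper intends: the paper gives no argument at all, dismissing the corollary as ``a direct consequence of the formula of the $a$-function,'' and the content you supply --- the check that the shifted symbols of $\blambda$ and $\bmu$ have the same number of entries and the same entry-sum, plus the standard Abel-summation fact that $\sum_i (i-1)\gamma_i$ strictly reverses the dominance order on sequences of fixed length and fixed sum --- is exactly the verification being left to the reader. Your explicit handling of the $b<0$ case via ${\bf B}_{b/a}(\blambda)={\bf B}_{-b/a}(\kappa(\blambda))$ and Proposition \ref{cj} is likewise consistent with how the preceding proposition in the paper treats that case.
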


\subsection{Constructible representations in type $B_n$}

 Let $V$ and $v$ be indeterminates and consider the Hecke algebra  $\mathcal{H}(\{V,v\})$ of type $B_n$ over $\mathbb{Z}[V^{\pm 1/2},v^{\pm 1/2}]$ as in \S \ref{basic}. We have
$$\operatorname{Irr} (\mathcal{H}_K (\{V,v\}))=\{V^{\blambda}\ |\ \blambda\in \Pi^2_n\}.$$
We assume that we have a specialization $\theta:\mathbb{Z}[V^{\pm 1/2},v^{\pm 1/2}]\to \mathbb{Q}(q^{\frac{1}{2}})$ where $q$ is an indeterminate 
 such that $\theta (V)=-q^{da}$ and $\theta (v)=q^a$ for $(a,b)\in \mathbb{N}^2$.  For $d\in \mathbb{Z}$. 
Then we have  an associated decomposition matrix. 
$$D_{\theta}=([V^{\blambda}:M])_{\blambda\in \Pi^2_n,M\in \operatorname{Irr}(\mathcal{H}_{\mathbb{Q}(q^{1/2})}(q^a,-q^{da}))}$$
 By Ariki's theorem,  this decomposition matrix
  is the matrix of the canonical basis of the irreducible highest weight  $\mathcal{U}_v ({\mathfrak{sl}_{\infty}})$-module with weight 
  $\Lambda_d+\Lambda_0$. Assume that $d\geq 0$ then by \cite{LM} it has another interpretation in terms of Kazhdan-Lusztig theory: this is the matrix $D_{\textrm{cons}}^{\phi}$ 
   of the constructible representations for the algebra $\mathcal{H}(W,S,\phi)$ where $\phi (t)=d$ and $\phi (s_i)=1$ for all $i=1,...,n-1$. Thus, we have 
   $D_{\textrm{cons}}^{\phi}=D_{\theta}$. 
   In other words,
    the columns of this decomposition  matrix give the expansion of the constructible representations  associated to the map $\phi$ in terms of 
     the simple $\mathcal{H}(W,S,\phi)$-modules $V^{\bmu}_{\phi}$.
     
      It is natural to ask if a basic set as defined in Def. \ref{basique}  can be found in this situation that is if one can order the rows and columns of the constructible matrix such that it has a unitriangular shape.
       The explicit determination of  the  matrix $D_{\theta}$ has been given by Lusztig and by Leclerc-Miyachi using different technics when $d\geq 0$.
      We here follows the latter exposition \cite{LM} and extend it to the case $d\in \mathbb{Z}$.
      
       Hence we assume from now that $\phi$ is such that    $\phi (t)=d\in \mathbb{Z}$ and $\phi (s_i)=1$ for all $i=1,...,n-1$.  First, we need 
    some additional combinatorial definition.  Let $\blambda$ be a bipartition and consider its $d$-symbol ${\bf B}_d (\blambda)=(B^0,B^1)$. We say 
     that $\blambda$ is standard, or equivalently that ${\bf B}_d (\blambda)$ is standard if
   $$B^{1}_i\geq B^{0}_i\ \textrm{for all $i\geq 1$}.$$
   The set of standard bipartitions is denoted by $\textrm{Std}(d)$.

      Let ${\bf B}_d (\blambda)$ be a standard symbol. We define an injection $\Psi:B^{1}\to B^{0}$ such that $\Psi (j)\leq j$ for all 
       $j\in B^{1}$. It is obtained by describing the subsets 
       $$B^{1}_l:=\{ j\in B^{1}\ |\ \Psi (j)=j-l\}.$$
       We set $B^{1}_0=B^{1}\cap B^{0}$ and for $l\geq 1$, we put:
       $$B^{1}_l=\{j\in B^{1}\setminus \{ B^{1}_0,...,B^{1}_{l-1}\}\ |\ j-l\in B^{0}\setminus \Psi (B^{1}_0\cup ...\cup B^{1}_{l-1})\}$$
       the pairs $(j,\Psi (j))$ with $\Psi (j)\neq j$ are called the pairs of the symbols ${\bf B}_d(\blambda)$. Let $\mathcal{C}(\blambda)$ be the
        set of all bipartitions $\bmu$ such that the symbol of $\bmu$ is obtained from ${\bf B}_d (\blambda)$ by permuting some pairs in 
       ${\bf B}_d (\blambda)$ and reordering the rows.  
       We also define $\operatorname{Inv}_{d}(\blambda)$ to be the bipartition in $\mathcal{C}(\blambda)$ whose symbols is obtained from the
        symbol of $\blambda$  
        after all possible permutations of the pairs. 
     
 The following is a result by  Lusztig and  Leclerc-Miyachi when $d\geq 0$ which is easily extend 
  in the case where $d\in \mathbb{Z}$ by Prop. \ref{changecons} and by the definition of symbols.
      \begin{Prop}\label{cons}
      Assume that $\phi$ is such that    $\phi (t)=d\in \mathbb{Z}$ and $\phi (s_i)=1$ for all $i=1,...,n-1$.  
     The constructible representations with respect to $\phi$ are labelled by the standard bipartitions. Moreover, if $\blambda$ is 
      a standard bipartition, the associated constructible representation  with respect to $\phi$ is  
      $$\bigoplus_{\bmu\in \mathcal{C}(\blambda)} V_{\phi}^{\bmu}.$$
      \end{Prop}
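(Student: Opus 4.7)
The plan is to reduce to the already-known case $d \geq 0$ (due to Lusztig and Leclerc--Miyachi, recalled just above) via Proposition~\ref{changecons}, and then to check that all the relevant combinatorial objects---the standard bipartitions and the sets $\mathcal{C}(\blambda)$---transform correctly under the involution $\kappa$ when one flips the sign of $d$. The only extra ingredient needed is the symmetry ${\bf B}_d(\blambda) = {\bf B}_{-d}(\kappa(\blambda))$ noted right after the definition of the shifted symbol.

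Assume $d < 0$ (the case $d \geq 0$ being the base case). Then $S^- = \{t\}$ and $S^+ = \{s_1,\ldots,s_{n-1}\}$, so Proposition~\ref{B}(1) identifies $\blambda^{\varepsilon}$ with $\kappa(\blambda)$. Proposition~\ref{changecons} consequently asserts that a module $\sum_\blambda \alpha_\blambda V_\phi^\blambda$ is constructible with respect to $\phi$ if and only if $\sum_\blambda \alpha_\blambda V_{|\phi|}^{\kappa(\blambda)}$ is constructible with respect to $|\phi|$. Since $|\phi|(t) = -d \geq 0$ and $|\phi|(s_i) = 1$, the positive case tells us that this happens exactly when the latter equals $\bigoplus_{\bmu \in \mathcal{C}(\bnu)} V_{|\phi|}^\bmu$ for some $\bnu$ standard in the sense of the $(-d)$-symbol.

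The key combinatorial step, which I expect to be the main---albeit essentially formal---obstacle, is to translate everything back in terms of the $d$-symbol. Because ${\bf B}_d(\blambda) = {\bf B}_{-d}(\kappa(\blambda))$ as a two-row tableau, the standardness condition $B^1_i \geq B^0_i$, the definition of the pairs $(j,\Psi(j))$ on the symbol, and the operation of permuting those pairs are all intrinsic features of the tableau itself, insensitive to whether one reads it as a $d$-symbol of $\blambda$ or as a $(-d)$-symbol of $\kappa(\blambda)$. Hence $\blambda\in\textrm{Std}(d)$ iff $\kappa(\blambda)\in\textrm{Std}(-d)$, and the set $\mathcal{C}(\blambda)$ computed from the $d$-symbol is exactly $\kappa$ applied to the set $\mathcal{C}(\kappa(\blambda))$ computed from the $(-d)$-symbol. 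Substituting $\bnu = \kappa(\blambda)$ in the previous paragraph then converts $\bigoplus_{\bmu\in\mathcal{C}(\bnu)} V_{|\phi|}^\bmu$ into $\bigoplus_{\bmu'\in\mathcal{C}(\blambda)} V_\phi^{\bmu'}$ as $\blambda$ ranges over $\textrm{Std}(d)$, which is exactly the statement of the proposition.
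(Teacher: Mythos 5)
Your proof is correct and is essentially the paper's argument: the paper disposes of the proposition in one sentence, saying the $d\geq 0$ case is due to Lusztig and Leclerc--Miyachi and that the extension to $d\in\mathbb{Z}$ follows ``by Prop.~\ref{changecons} and by the definition of symbols,'' which is precisely the reduction you carry out. You merely make explicit the two facts the paper leaves implicit, namely that $\blambda^{\varepsilon}=\kappa(\blambda)$ here and that the identity ${\bf B}_d(\blambda)={\bf B}_{-d}(\kappa(\blambda))$ transports standardness and the sets $\mathcal{C}(\blambda)$ across the sign change.
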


\begin{Exa}
We consider the Weyl group of type $B_3$. Let $\phi$ be such that    $\phi (t)=2\in \mathbb{Z}$ and $\phi (s_i)=1$ for all $i=1,2$.  
 From above, one can check that the only non trivial constructible representations are:
 $$V_{\phi}^{(\emptyset,3)}\oplus V_{\phi}^{(1,2)},\ V_{\phi}^{(1,2)}\oplus V_{\phi}^{(1.1,1)}, V_{\phi}^{(1.1,1)}\oplus V_{\phi}^{(1.1.1,\emptyset)}$$
In other words, we have:
$$D^{\phi}_{\textrm{cons}}=\left(
\begin{array}{ccccccccc}
   1&0&0&0&0&0&0&0&0\\
  0&1&0&0&0&0&0&0&0\\
 0&0&1&0&0&0&0&0&0\\
   1&0&0&1&0&0&0&0&0\\
  0&0&0&0&1&0&0&0&0\\
    0&0&0&0&0&1&0&0&0\\
 0&0&0&1&0&0&1&0&0\\
    0&0&0&0&0&0&0&1&0\\
 0&0&0&0&0&0&0&0&1\\
   0&0&0&0&0&0&1&0&0
  \end{array}\right)
  \begin{array}{ccccccccc}
   (\emptyset,3)\\
   (\emptyset,2.1)\\   
  (\emptyset,1.1.1)\\
 (1,2)\\
 (1,1.1)\\
  (2,1)\\
(1.1,1)\\
(3,\emptyset)\\
(2.1,\emptyset)\\
(1.1.1,\emptyset)
  \end{array}$$
Note that the constructible representations are labelled by the set of standard bipartitions which is 
in this case $\Pi^2_3\setminus \{(1.1.1,\emptyset )\}$. 
Note also that the unique non trivial block of the above matrix is given by the following set of bipartitions 
$$\{{(\emptyset,3)},{(1,2)}, {(1.1,2)}, {(1.1.1,\emptyset)}\}$$
which corresponds to the unique non trivial family of characters.

Now, if we set $\phi'$ be such that    $\phi' (t)=-2\in \mathbb{Z}$ and $\phi' (s_i)=1$ for all $i=1,2$.  
 From above, one can check that the only non trivial constructible representations are:
 $$V_{\phi'}^{(3,\emptyset)}\oplus V_{\phi'}^{(2,1)},\ V_{\phi'}^{(2,1)}\oplus V_{\phi'}^{(1,1.1)}, V_{\phi'}^{(1,1.1)}\oplus V_{\phi'}^{(\emptyset,1.1.1)}$$
The unique non trivial block of the above matrix is given by the following set of bipartitions 
$$\{{(3,\emptyset)},{(2,1)}, {(1,1.1)}, {(\emptyset,1.1.1)}\}$$
which corresponds to the unique non trivial family of characters.
\end{Exa}

\subsection{Basic sets of constructible representations}
The aim of this section is to show the existence of basic sets for the matrix affording the constructible representations. 
 To do this, we have to study the matrix 
$D_{\textrm{cons}}^{\phi}$ 
   of the constructible representations for the algebra $\mathcal{H}(W,S,\phi)$ where $\phi (t)=d\in \mathbb{Z}$ and $\phi (s_i)=1$ for all $i=1,...,n-1$. 
    We denote by $\textrm{Cons} (d)$  the set of constructible $\mathcal{H}(W,S,\phi)$-modules. 
 The main result of this section is the following one. 
\begin{Th}\label{conB}
We keep the above notations and we put $r\in \mathbb{Q}$ such that $r\neq d$.
\begin{enumerate}
\item   For all $U\in \operatorname{Cons} (d) $ there exists $\lambda_U\in \Lambda$ such that
 $$[V^{\blambda_U},U] =1\textrm{ and }a^{r}(\bmu)>a^{r}(\blambda_U)\textrm{ if }[V^{\bmu},U] \neq 0$$
\item Let 
 $$\mathcal{B}_{\infty}^r:=\{\lambda_U\ |\  U\in \operatorname{Cons} (d)  \}$$
Then the map 
 $$\begin{array}{cll}
\operatorname{Cons}(d)   &\to& \mathcal{B}_{\infty}^r\\
U &\mapsto& \lambda_U
\end{array}$$
is a bijection
\item We have 
 $$\mathcal{B}_{\infty}^r=\left\{\begin{array}{ll}
 \operatorname{Std}(d)& \textrm{if } r<d\\
 \operatorname{Inv}_{d}( \operatorname{Std}(d))& \textrm{if } r>d\end{array}\right.$$
 \end{enumerate}

\end{Th}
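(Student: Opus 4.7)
The plan is to use Proposition~\ref{cons} to reduce the problem to a combinatorial analysis of the shifted $r$-symbol on each family $\mathcal{C}(\blambda)$, and then to exhibit the unique minimiser of $a^r$ within each such family. By Proposition~\ref{cons}, every $U\in\operatorname{Cons}(d)$ has the form $U_{\blambda}=\bigoplus_{\bmu\in\mathcal{C}(\blambda)}V^{\bmu}_{\phi}$ for a unique $\blambda\in\operatorname{Std}(d)$. Hence parts~(1) and~(2) reduce to the combinatorial assertion that, for each $\blambda\in\operatorname{Std}(d)$, the function $a^r$ attains a strict minimum on $\mathcal{C}(\blambda)$, and that this minimum is $\blambda$ itself when $r<d$ and $\operatorname{Inv}_d(\blambda)$ when $r>d$.

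To prove this claim I would analyse the effect on the $r$-symbol of swapping a single pair of ${\bf B}_d(\blambda)$. Fix an admissible size $h$ large enough to represent every element of $\mathcal{C}(\blambda)$ simultaneously and to stabilise the pair structure, so that every pair $(j,\Psi(j))$ satisfies $j,\Psi(j)\geq d$; this is possible because $a^r$ is independent of~$h$ by the proposition in \S\ref{aBn}. Every $\bmu\in\mathcal{C}(\blambda)$ then shares with $\blambda$ the same $d$-symbol multiset, while the $r$-symbol multiset differs only in that the entries originating from $\beta^0_{\bmu}$ are shifted by $(r-d)$ instead of by $0$. Consequently, a single pair swap $(j,\Psi(j))$ replaces the two $r$-symbol entries $\{\Psi(j)+(r-d),\,j\}$ by $\{j+(r-d),\,\Psi(j)\}$, preserving their sum.

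A direct comparison shows that for $r>d$ the swapped pair strictly dominates the unswapped one, since its maximum $j+(r-d)$ exceeds both $j$ and $\Psi(j)+(r-d)$; symmetrically, for $r<d$ the unswapped pair strictly dominates. Since pairs of ${\bf B}_d(\blambda)$ have pairwise disjoint supports (by injectivity of $\Psi$) and dominance of partitions is preserved under multiset union with a common complement, any non-empty subset of pair swaps produces a $\bmu\in\mathcal{C}(\blambda)$ whose $r$-symbol partition $\gamma^h_r(\bmu)$ is strictly dominated by $\gamma^h_r(\blambda)$ when $r<d$ and strictly dominates it when $r>d$. Combined with Corollary~\ref{orde}, this yields the strict inequality $a^r(\bmu)>a^r(\blambda_U)$ required in~(1), with $\blambda_U=\blambda$ for $r<d$ and $\blambda_U=\operatorname{Inv}_d(\blambda)$ for $r>d$; part~(3) is then immediate.

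The main obstacle is the injectivity statement in~(2). For $r<d$ it is trivial since $\blambda_U=\blambda$, but for $r>d$ one must show that $\operatorname{Inv}_d$ is injective on $\operatorname{Std}(d)$. This cannot be obtained merely from $\operatorname{Inv}_d$ being an involution, since it is not one in general: the pair structure of ${\bf B}_d(\operatorname{Inv}_d(\blambda))$ typically differs from that of ${\bf B}_d(\blambda)$. I would deduce injectivity by exploiting the dominance analysis itself, using that $\operatorname{Inv}_d(\blambda)$ is characterised intrinsically as the element of maximal $r$-symbol dominance inside its $\mathcal{C}$-family, together with the observation that this maximal element, combined with the standardness condition $B^1_i\geq B^0_i$, determines $\blambda$ uniquely inside the common family.
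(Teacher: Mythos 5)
Your proposal follows essentially the same route as the paper's proof: Proposition~\ref{cons} reduces the theorem to locating the $a^r$-minimal constituent of each family $\mathcal{C}(\blambda)$, and this is done by comparing $r$-symbols in the dominance order and invoking Corollary~\ref{orde}. You actually supply more detail than the paper at the key step: where the paper asserts that $\gamma^h_{r}(\operatorname{Inv}_{d}(\blambda))\unrhd\gamma^h_{r}(\bmu)$ (resp.\ $\gamma^h_r(\blambda)\unrhd\gamma^h_r(\bmu)$) is ``easy to see'', your single-pair-swap computation (equal sum, strictly larger maximum, additivity of dominance over the disjoint pairs) is precisely the verification being waved at, and it is correct. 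One cosmetic difference: your uniform picture of the $r$-symbol as ``the $d$-symbol with the $\beta^0$-entries shifted by $r-d$'' is only literally true for $r,d\geq 0$; the paper's definition of ${\bf B}_r$ exchanges the two rows when $r<0$, and the paper accordingly treats $r<0$ with a symbol of larger size and reduces $d<0$ to $d>0$ via Proposition~\ref{cj} and $\kappa$. Since both $a^r$ and the dominance comparison are unchanged when a common constant is added to all entries of the symbols being compared, your argument survives this, but the reduction should be stated.

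The one genuine weak point is your treatment of part~(2) for $r>d$, i.e.\ the injectivity of $\operatorname{Inv}_d$ on $\operatorname{Std}(d)$. Your proposed resolution does not work as written: if $\operatorname{Inv}_d(\blambda)=\operatorname{Inv}_d(\blambda')$ with $\blambda\neq\blambda'$, the two preimages sit in the \emph{distinct} families $\mathcal{C}(\blambda)$ and $\mathcal{C}(\blambda')$ (which need not coincide even though they share an element), so the phrase ``determines $\blambda$ uniquely inside the common family'' begs the question. To be fair, the paper's own proof of Theorem~\ref{conB} is silent on this point as well; it is justified elsewhere by the crystal-isomorphism results of \cite{J}, which are what make the map $\blambda\mapsto\operatorname{Inv}_D(\blambda)$ a bijection in the proof of Theorem~\ref{cont}. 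You should either import that input explicitly or produce an actual combinatorial inverse of $\operatorname{Inv}_d$ on $\operatorname{Std}(d)$; the dominance analysis alone does not yield one.
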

\begin{proof}
Let us first assume that  $d\geq 0$.  
Let  $\blambda$ be a standard bipartition then by Prop \ref{cons}, we have an associated constructible representation labelled by this bipartition.
Assume that $V^{\bmu}_{\phi}$ appears as an irreducible constituent   
   in the expansion of this  constructible representation. Then $\bmu$ can be constructed from $\blambda$   by permuting some pairs in the shifted $d$-symbol of $\blambda$. Let
    $h$ be an admissible size for  $\blambda$. By construction, it implies that this size is admissible for $\bmu$. Let  
${\bf B}_d (\bmu)=(B^{0},B^{1})$ be the shifted $d$-symbol of $\bmu$  of size $h$. By \S \ref{aBn}, we have
 $B^0=\beta^0 (d)$ and $a=\beta^1$. 
\begin{itemize}
\item If $r>d$  the shifted $r$-symbol 
  of $\bmu$ is ${\bf B}_r (\bmu)=(B^{0}(r-d),B^{1})$. 
   By the definition of $\mathcal{C}(\blambda)$,
    It is easy to see that 
   $$\gamma^h_{r}(\operatorname{Inv}_{d}(\blambda))\unrhd  \gamma^h_{r}(\bmu),$$
for all $\bmu\in \mathcal{C}(\blambda)$.   
 Hence by Cor  \ref{orde},  we obtain:
$$   a^r (\operatorname{Inv}_{d}(\blambda))\leq a^r (\bmu)   $$
\item if $r<d$, the shifted $r$-symbol of $\bmu$ of size $h$ is ${\bf B}_r (\bmu)=(B^{0}(d-r),B^{1})$ if  $r$ is positive.
 In this case, by the definition of $\mathcal{C}(\blambda)$ It is easy to see that 
   $$\gamma_r^h(\blambda)\unrhd  \gamma_r^h(\bmu),$$
for all $\bmu\in \mathcal{C}(\blambda)$  such that $\bmu\neq \blambda$ . Hence we obtain:
$$   a^r (\blambda)\leq a^r (\bmu)   .$$
If $r$ is negative, the shifted $r$-symbol of $\bmu$ of size $h+d$ is ${\bf B}_r (\bmu)=(B^{1}(d-r),B^{0})$.
 In this case, by the definition of $\mathcal{C}(\blambda)$ It is easy to see that 
   $$\gamma_r^{h+d}(\blambda)\rhd  \gamma_r^{h+d}(\bmu),$$
and we can conclude as above. \end{itemize}
To summarize, We know that an arbitrary column of the constructible matrix $D_{\theta}$ is naturally labelled
 by a standard bipartition $\blambda_U$ with $U\in \textrm{Cons} (d)$. The 
 above discussion shows that the minimal bipartition $\blambda$  with respect to $a^r$ such that 
$[V^{\blambda}:M]\neq 0$ is $\blambda=\blambda_U$ if $r<d$, $\blambda=\textrm{Inv}_d (\blambda_U )$ otherwise. In addition
 by Prop \ref{cons}, if $[V^{\blambda}:U]\neq 0$  then $[V^{\blambda}:U]= 1$.  This proves the Theorem if $d>0$.
 
The case $d<0$ is deduced from the above one using Prop. \ref{cj} and the facts that $\kappa(\textrm{Std} (d))=\textrm{Std} (-d)$,  $\kappa(\textrm{Inv}_d (\textrm{Std} (d)))=\textrm{Inv}_{-d}(\textrm{Std} (-d))$

\end{proof}

 In the expansion of a constructible character, all
 the simple modules have the same value with respect $a^d$. This follows from 
 the definition of constructible representations and can also be easily seen in the formula above. Theorem \ref{conB}
  shows that  modifying the $a$-function by adding an integer $s$ to $d$ leads  to a natural way to order the rows and columns of the constructible matrix 
  so that it is unitriangular. This induces the existence of a canonical basic set which only depends on the sign of $s$.
 
 Let us now describe the consequences on the parametrisations of the simple modules for Hecke algebras 
  of type $B_n$ in the modular case.


\section{Basic sets  in type $B_n$ }
\subsection{Explicit determination in a special case}
Recall that $A:=\mathbb{Z}[q^{1/2},q^{-1/2}]$. Let  $\phi$  such that $\phi (t)=b\geq 0$ and $\phi (s_i)=a>0$ for $i=1,...,n-1$, Let 
$$\theta: A\to \mathbb{Q}(q_0^{1/2})$$
be a specialization such that $\theta (q)=q_0\in \mathbb{C}^{*}$. Let $e\geq 2$ be the multiplicative order of $\eta_e:=q_0^a$.
Assume that 
$q_0^b=-q_0^{ad}$ 
for some $d \in \mathbb{Z}$. We have an associated  decomposition matrix $D_{\theta}$ and a canonical basic set $\mathcal{B}(\phi)$ 
 which  will, from now,  be rather denoted  by $\mathcal{B}_e^{b/a}$.  
 Consider now  $\phi^1$  such that $\phi^1 (t)=b+ae$ and $\phi^1(s_i)=a$ for $i=1,...,n-1$. Applying the specialization $\theta$, we obtain
again the algebra $\mathcal{H}_{k}(q_0^a,-q_0^{da})$.  
Hence we have another basic set denoted by $\mathcal{B}_e^{b/a+e}$ 
which  has the same cardinality as $\mathcal{B}_e^{b/a}$ but is computed with respect 
 to the $a$-function $a^{b/a+e}$. Continuing in this way we obtain several 
  basic sets 
  $$\mathcal{B}^{b/a}, \mathcal{B}_e^{b/a+e},...,\mathcal{B}_e^{b/a+te}, ...$$
  and one can assume that $0\leq b/a<e$.

 These basic sets have been computed in \cite{GJBn} without the assumptions of {\bf P1-P15} in characteristic $0$.  It has been shown that
  the bipartitions labelling these sets are given by the so called Uglov 
   bipartitions. These bipartitions appear as natural labelling of the Kashiwara's crystal basis for
    irreducible highest weight-modules of level two. As several combinatorial definitions are necessary to introduce them, we have chosen to omit the definition of these bipartitions here. We refer to \cite{GJlivre}
     or \cite{J} for details on them.

 \begin{Th}[Geck-Jacon]\label{mainB} We keep the above notations. Let $e\geq 2$ be the multiplicative order of $q^a$
and let $p_0\in \mathbb{Z}$ be such that
\[ d+p_0e<\frac{b}{a}< d+(p_0+1) e.\]
(Note that the above conditions imply that $b/a \not\equiv d \bmod e$.) 
Then for all $t\geq 0$, we have
\[ \mathcal{B}_e^{b/a+te}=\Phi_{e,n}^{(d+(p_0+t)e,0)}\]
where $\Phi_{e,n}^{(d+(p_0+t)e,0)}$ is defined
in \cite[Def. 4.4]{GJBn}. 
\end{Th}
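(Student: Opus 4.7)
The plan is to reduce the statement to the explicit description of basic sets established in \cite{GJBn}, which applies for all nonnegative parameters and, as the authors emphasize, does not require Lusztig's conjectures \textbf{P1--P15} in characteristic zero. Since $b \geq 0$, $a > 0$ and $t \geq 0$, the weight function $\phi^{(t)}$ defined by $\phi^{(t)}(t) = b + ate$ and $\phi^{(t)}(s_i) = a$ is nonnegative, so the shifted-symbol formula for the $a$-function from Section~\ref{aBn} applies directly: by Corollary~\ref{orde}, the ordering on bipartitions induced by $a^{b/a+te}$ is controlled by the dominance order on the multiset $\gamma_{b/a+te}^{h}(\blambda)$ extracted from the shifted $(b/a+te)$-symbol.

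First I would treat the case $t = 0$. By Ariki's theorem, $D_\theta$ is the evaluation at $v=1$ of the canonical basis matrix for the irreducible highest weight $\mathcal{U}_v(\widehat{\mathfrak{sl}_e})$-module of weight $\Lambda_d + \Lambda_0$, and $\Phi_{e,n}^{(d+p_0 e, 0)}$ from \cite[Def.~4.4]{GJBn} labels the highest-weight vertices in the crystal of the level-two Fock space with charge $(d+p_0 e, 0)$. The task is then to check that when bipartitions are ordered by $a^{b/a}$, the Uglov bipartitions of charge $(d+p_0e, 0)$ are precisely those minimizing $a^{b/a}$ among the nonzero entries of each column of $D_\theta$, which by Definition~\ref{basique} characterizes $\mathcal{B}_e^{b/a}$. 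The interval condition $d + p_0 e < b/a < d + (p_0+1)e$ selects the unique charge in the orbit $\{(d + ke, 0) : k \in \mathbb{Z}\}$ compatible with this ordering; the strict inequality $b/a \neq d + p_0 e$ is essential because on the boundary several rows of $D_\theta$ would share the same $a$-value and the basic set would not be uniquely determined.

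The induction on $t$ should then be transparent: increasing $t$ by one adds $e$ to $b/a$, which shifts one row of the shifted symbol by $e$ and corresponds combinatorially to the translation of charges $(s_1, s_0) \mapsto (s_1 + e, s_0)$ on the Fock-space side; this translation sends $\Phi_{e,n}^{(d+(p_0+t)e,0)}$ to $\Phi_{e,n}^{(d+(p_0+t+1)e,0)}$ and preserves the crystal structure, so the base case propagates. The hard part will be the combinatorial matching in the base case $t = 0$: one must verify that the dominance order on the shifted-symbol partitions $\gamma_{b/a}^h(\blambda)$ coincides with the order used to define Uglov bipartitions via the Kashiwara crystal of the charged Fock space. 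This matching is the technical core of \cite{GJBn}, and the proof of the present theorem essentially amounts to unpacking it and verifying that the interval condition on $b/a$ pins down the charge $(d+p_0e, 0)$.
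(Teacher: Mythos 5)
This theorem is stated in the paper with the attribution ``Geck--Jacon'' and carries no proof: it is imported wholesale from \cite{GJBn}, where it is established without assuming \textbf{P1--P15} in characteristic zero. So there is no internal argument to compare yours against; the only fair comparison is with the strategy of the cited reference. Your outline of the base case is consistent with that strategy (Ariki's theorem identifying $D_\theta$ with the canonical basis matrix, compatibility of the $a^{b/a}$-ordering with the combinatorics of Uglov bipartitions, the interval condition pinning down the charge), but you explicitly defer the technical core --- the matching of the dominance order on $\gamma^h_{b/a}(\blambda)$ with the order governing the canonical basis expansion --- to \cite{GJBn}. Given that the paper itself does exactly this, that deferral is acceptable, though it means your text is a reading guide rather than a proof.

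The one substantive misstep is the induction on $t$. No induction is needed or wanted: for each fixed $t\geq 0$ the weight function $\phi^{(t)}$ with $\phi^{(t)}(t)=b+ate$ satisfies $d+(p_0+t)e < (b+ate)/a < d+(p_0+t+1)e$, so the $t=0$ statement applied with $p_0+t$ in place of $p_0$ gives the conclusion directly. More importantly, your claim that passing from $t$ to $t+1$ is a charge translation that ``preserves the crystal structure, so the base case propagates'' is misleading: the sets $\Phi_{e,n}^{(d+ke,0)}$ for consecutive $k$ are genuinely different sets of bipartitions, related by a nontrivial crystal isomorphism which the paper later identifies with $\operatorname{Inv}_D$ (Theorem \ref{cont}). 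If your induction step were really transparent, Theorem \ref{cont} would be content-free. As written, the inductive step either silently assumes what Theorem \ref{cont} proves or conflates the relabelling of charges with an identity of sets; either way it should be replaced by the direct reapplication of the general statement with $b$ replaced by $b+ate$.
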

 Without loss of generality, one can (and do) assume that $p_0=0$.  
 By the results in the second section,  the datum of 
  $\phi'$  such that $\phi' (t)=\varepsilon_1 b$ and $\phi' (s_i)=\varepsilon_2 a$ (for $i=1,...,n-1$) 
 with $(\varepsilon_{1},\varepsilon_2)\in \{\pm 1\}^2$ also  yields the existence 
  of a basic set $\mathcal{B}(\phi')$. By Prop. \ref{B}, they can be easily computed 
   using the above theorem.  In particular, the case $\varepsilon_2=1$ and $\varepsilon_1=-1$
    implies the existence of a basic set $\mathcal{B}^{-b/a}$. In fact, considering all the basic sets
    we have already obtained: 
         $$\mathcal{B}^{b/a}, \mathcal{B}_e^{b/a+e},...,\mathcal{B}_e^{b/a+te}, ...$$
    we obtain several other basic sets 
     $$\mathcal{B}^{-b/a}, \mathcal{B}_e^{-b/a-e},...,\mathcal{B}_e^{-b/a-te}, ...$$
  Now recall the  specialization
  $$\theta: A\to \mathbb{Q}(q_0^{1/2})$$
 such that $\theta (q)=q_0$. Looking at the Hecke algebra
  $\mathcal{H}(\{q^{-b},q^a\})$ and applying the specisalization $\theta$, we obtain  a decomposition matrix
   which can be identify with $D^{\theta}$ by \cite[\S 3.1]{CJ} .   Keeping the above notations, we have:
$q_0^{-b+ae}=-q_0^{-ad}$. 
Note that $-b+ae$ and $a$ are both positives. We then obtain
 a basic set $\mathcal{B}^{-b/a+e}$. Actually, using the same
  arguments as above, one obtain several basic sets:
      $$\mathcal{B}^{-b/a+e}, \mathcal{B}_e^{-b/a+2e},...,\mathcal{B}_e^{-b/a+te}, ...$$ 
      Keeping the notation of thm \ref{mainB} (recall that $p_0=0$), note that we have 
      \[ -d<-\frac{b}{a}+e< -d+e.\]
The above theorem gives also the explicit determination of these basic sets:
 \begin{Cor} Keeping the above notations, for all $t> 0$, we have
\[ \mathcal{B}_e^{-b/a+te}=\Phi_{e,n}^{(-d+(t-1)e,0)}\]
where $\Phi_{e,n}^{(-d+(t-1)e,0)}$ is defined in \cite[Def. 4.4]{GJBn}. 
\end{Cor}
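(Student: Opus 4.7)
The plan is to reduce the statement directly to Theorem \ref{mainB} by replacing the negative weight at the generator $t$ with a suitable non-negative shift, exploiting the fact that the specialized algebra depends only on $q_0^a$ modulo its order $e$. For a fixed integer $t\geq 1$, I will introduce the weight function $\psi$ that assigns the value $-b+ate$ to the special generator and $a$ to every other simple reflection of $B_n$. By the very construction recalled just before the corollary, $\mathcal{B}_e^{-b/a+te}$ is the basic set associated to $\psi$, the specialization $\theta$, and the Lusztig $a$-function $a^{-b/a+te}$.

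Next I will verify the hypotheses of Theorem \ref{mainB} for $\psi$. Positivity holds because $0\leq b/a<e$ together with $t\geq 1$ forces $-b+ate\geq 0$. For the specialization, using $q_0^{ae}=1$ and $q_0^b=-q_0^{ad}$ I compute
\[\theta(q^{-b+ate})=q_0^{-b}=-q_0^{a(-d)},\]
so the integer playing the role of ``$d$'' in Theorem \ref{mainB} is $-d$. It then remains to identify the integer $p_0'$ satisfying $-d+p_0'e<(-b+ate)/a<-d+(p_0'+1)e$: from the hypothesis $p_0=0$, namely $d<b/a<d+e$, one obtains $-d-e<-b/a<-d$, and adding $te$ yields
\[-d+(t-1)e\;<\;-\tfrac{b}{a}+te\;<\;-d+te,\]
so $p_0'=t-1$.

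Applying Theorem \ref{mainB} to the weight function $\psi$ with its own shift parameter equal to $0$ then gives $\mathcal{B}_e^{-b/a+te}=\Phi_{e,n}^{(-d+(p_0'+0)e,\,0)}=\Phi_{e,n}^{(-d+(t-1)e,\,0)}$, which is the desired identity. I do not anticipate a serious obstacle: the argument is essentially a change of variables combined with careful bookkeeping of which $d$ and $p_0$ appear in the statement of Theorem \ref{mainB} after the shift. The only minor point to check is the non-congruence condition $(-b+ate)/a\not\equiv -d\bmod e$, which is automatic from the strict inequalities above.
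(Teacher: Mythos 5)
Your proof is correct and follows essentially the same route as the paper: replace the negative parameter at $t$ by the positive one $-b+ate$, observe that the specialization sends it to $q_0^{-b}=-q_0^{-ad}$ so that the role of $d$ is played by $-d$, and then invoke Theorem \ref{mainB}. The only cosmetic difference is that the paper fixes the base case $-b/a+e$ (with $p_0'=0$) and lets the theorem's shift run over $t-1$, whereas you fix the shift at $0$ and compute $p_0'=t-1$ directly; the bookkeeping is identical.
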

Finally, as above, the existence of basic sets 
     $$\mathcal{B}^{-b/a+e}, \mathcal{B}_e^{-b/a+2e},...,\mathcal{B}_e^{-b/a+te}, ...$$
yields the existence of basic sets:
     $$\mathcal{B}^{b/a-e}, \mathcal{B}_e^{b/a-2e},...,\mathcal{B}_e^{b/a-te}, ...$$
by Prop \ref{conB}.

 \subsection{An Action of the affine Weyl group $\widehat{\mathfrak{S}}_2$}
 Let us summarize the different basic sets  we have obtained  and the  parametrizations by
  the Uglov $l$-partitions. The results in \cite{J} allow to change the parametrization of the sets. 
  First by \cite[Prop. 3.1 (2)]{J} For all $(s_0,s_1)\in \mathbb{Z}^2$, we have 
  $$\kappa(\Phi_{e,n}^{(s_0,s_1)})=\Phi_{e,n}^{(s_1,s_0+e)}$$
  Moreover, by \cite[Prop 3.1 (1)]{J},  for all $m\in \mathbb{Z}$, we have:
  $$\Phi_{e,n}^{(s_0,s_1)}=\Phi_{e,n}^{(s_0+m,s_1+m)}$$
To summarize, the following tabular gives the basic sets and the parametrizations
  by the Uglov bipartitions. 

  \begin{center}
\begin{tabular}{|l|l |  |l|l|}
\hline
\hline
  Basic set with & Associated set &    Basic set with & Associated set   \\
    positive parameters &  of bipartitions  & negative parameters &of bipartitions  \\
   \hline
...  &...& ... &...\\
\hline
 $\mathcal{B}_e^{-b/a+te}$& $\Phi_{e,n}^{(0,d -(t-1)e)} $ &$\mathcal{B}_e^{b/a-te}$&$\Phi_{e,n}^{(d-te,0)} $  \\
    \hline
...  &... &... &...\\
\hline
 $\mathcal{B}_e^{-b/a+2e}$&$\Phi_{e,n}^{(0,d-e)} $ & $\mathcal{B}_e^{b/a-2e}$&$\Phi_{e,n}^{(d-2e,0)} $  \\
    \hline
    $\mathcal{B}_e^{-b/a+e}$ & $\Phi_{e,n}^{(0,d )} $ &$\mathcal{B}_e^{b/a-e}$& $\Phi_{e,n}^{(d-e,0)} $ \\ 
\hline
 $\mathcal{B}_e^{b/a}$ & $\Phi_{e,n}^{(d,0)} $ & $\mathcal{B}_e^{-b/a}$  & $ \Phi_{e,n}^{(0,d+e)}   $ \\
 \hline
 $\mathcal{B}_e^{b/a+e}$ & $\Phi_{e,n}^{(d+e,0)} $&$\mathcal{B}_e^{-b/a-e}$&  $\Phi_{e,n}^{(0,d+ 2e)}    $\\
\hline
 $\mathcal{B}_e^{b/a+2e}$ &$\Phi_{e,n}^{(d+2e,0)}$ &$\mathcal{B}_e^{-b/a-2e}$&  $ \Phi_{e,n}^{(0,d+ 3e)}   $\\
\hline
...  &... &... &...   \\
\hline
 $\mathcal{B}_e^{b/a+te}$&$\Phi_{e,n}^{(d+te,0)}$  &$\mathcal{B}_e^{-b/a-te}$&$\Phi_{e,n}^{(0,d+(t+1)e)}   $   \\
\hline
...  & ...&... &...\\
\hline
\end{tabular}
\end{center}
Following \cite{JL},  we set 
$$\mathcal{F}=\left\{ \pm b/a+te\ |\ t\in \mathbb{Z}\right\}$$
Let  $\widehat{\mathfrak{S}}_{2 }$  be the extended affine symmetric group with generators  $\sigma$ and  $%
y_{0}$, $y_{1}$ and relations
\begin{equation*}
y_{0}y_{1}=y_{1}y_{0}, \quad \sigma^2=1, \quad y_{0}=\sigma y_{1}\sigma.
\end{equation*}

One can define an action of $\widehat{\mathfrak{S}}_2$ on $\mathcal{F}$ determined by the following identities. For all $t\in \mathbb{Z}$, we set:
 $$\sigma. (b/a+te)=-b/a-te,\quad y_0. ( b/a+te)=b/a+(t+1)e,$$
 $$ y_0. ( -b/a+te)=-b/a+(t+1)e  $$
One can easily check that this action is well defined. The above dicussion  yields the existence of  an action on the basic sets by setting for all $w\in \widehat{\mathfrak{S}}_2$ and $\gamma\in \mathcal{F}$:
  $$w.\mathcal{B}_e^{\gamma}=\mathcal{B}_e^{w.\gamma}$$
By Prop \ref{conB}, we have:
 $$\begin{array}{ll}
 \sigma.\mathcal{B}_e^{\gamma}&=\kappa (\mathcal{B}_e^{\gamma})\\
 &=\left\{ (\lambda^0,\lambda^1)\ |\ (\lambda^1,\lambda^0)\in \mathcal{B}_e^{\gamma}\right\} \end{array} 
 $$
 Hence, to describe the action of $\widehat{\mathfrak{S}}_2$ on $\mathcal{F}$, it suffices to understand the action of $y_0$ on an arbitrary basic set. 
By the results in \cite{J} (see also the generalizations in \cite{JL}) together with Thm \ref{mainB},  the action of $y_0$ corresponds to a crystal isomorphism. Using 
 the combinatorial study of this isomorphism in this paper,  It 
 can be expressed using the map $\operatorname{Inv}_D$ defined in the previous section. This is given by the following theorem which uses the combinatorics 
  developed in \cite{J}.

  \begin{Th}\label{cont}
  Assume that $\gamma\in \mathcal{F}$. Then there exists $D\in \mathbb{Z}$ such that:
$$\mathcal{B}_e^{\gamma}  =\Phi_{e,n}^{(D-e,0)}$$
 To describe the action of $y_0$, one can assume that $D\geq 0$, then we have
  $$y_0 . \mathcal{B}_e^{\gamma}= \operatorname{Inv}_{D}(  \mathcal{B}_e^{\gamma})$$
  \end{Th}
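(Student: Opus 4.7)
The plan has two main ingredients: first, rewrite every basic set $\mathcal{B}_e^{\gamma}$ in a uniform way $\Phi_{e,n}^{(D-e,0)}$, and second, interpret the action of $y_0$ through the combinatorial map $\operatorname{Inv}_{D}$. The first step is pure bookkeeping using the summary table of §4.1 and the identity $\Phi_{e,n}^{(s_0,s_1)}=\Phi_{e,n}^{(s_0+m,s_1+m)}$ of \cite[Prop. 3.1(1)]{J}. The entries with positive parameters are already in the form $\Phi_{e,n}^{(d+te,0)}=\Phi_{e,n}^{((d+(t+1)e)-e,0)}$, so $D=d+(t+1)e$. The entries with negative parameters have the form $\Phi_{e,n}^{(0,d+(t+1)e)}$, which by choosing $m=-(d+(t+1)e)$ becomes $\Phi_{e,n}^{(-(d+(t+1)e),0)}=\Phi_{e,n}^{((-(d+(t+1)e)+e)-e,0)}$, so $D=-d-te$. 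In all cases, $\mathcal{B}_e^{\gamma}=\Phi_{e,n}^{(D-e,0)}$ for an explicit integer $D$ depending linearly on $t$.

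Next, the action of $y_0$ translates explicitly through the table. By construction, $y_0$ increments $\gamma$ by $e$ in the parameter, which corresponds, in the above normalization, to sending the multicharge $(D-e,0)$ to the multicharge of the next line of the table. A case-by-case check (separating the signs of the parameter) shows that this next basic set is exactly $\Phi_{e,n}^{(D,0)}$ after applying the same shift-identity $\Phi_{e,n}^{(s_0,s_1)}=\Phi_{e,n}^{(s_0+m,s_1+m)}$. Thus, under the normalization, $y_0$ corresponds to increasing the first coordinate of the multicharge by $e$, i.e. passing from $\Phi_{e,n}^{(D-e,0)}$ to $\Phi_{e,n}^{(D,0)}$.

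The heart of the proof is then to show that this passage $\Phi_{e,n}^{(D-e,0)}\to\Phi_{e,n}^{(D,0)}$ is realized by the combinatorial map $\operatorname{Inv}_{D}$ of §3.2 when $D\geq 0$. By the results of \cite{J}, shifting one component of the multicharge by $e$ induces a canonical crystal isomorphism on the associated sets of Uglov bipartitions; this isomorphism admits an explicit combinatorial description, extended and refined in \cite{JL}. The task is to translate that description from the abacus/charged-multipartition language used in \cite{J} into the language of shifted $D$-symbols used in §3. After this translation, the crystal-isomorphism rule becomes: inside each family (equivalently, each constructible block from Proposition \ref{cons}), reorder the symbol by performing every admissible swap between the two rows. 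This is exactly the defining recipe for $\operatorname{Inv}_{D}$ in §3.2.

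The main obstacle is precisely this dictionary between shifted symbols and $e$-abaci. One has to carefully align the pair-matching $\Psi: B^{1}\to B^{0}$ used in the definition of $\operatorname{Inv}_{D}$ with the $e$-period structure of the abacus display associated with the multicharge $(D-e,0)$, and check that ``maximal permutation of pairs'' on the symbol side coincides with the explicit algorithmic formula for the crystal isomorphism on the abacus side. The hypothesis $D\geq 0$ ensures that the relevant symbol is standard in the sense of §3.2, so that the map $\operatorname{Inv}_{D}$ is well-defined and agrees with the algorithm of \cite{J}; the remaining cases are obtained by symmetry, via $\kappa$ and Proposition \ref{changecons}.
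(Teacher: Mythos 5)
Your proposal is correct and follows essentially the same route as the paper: normalize each basic set to the form $\Phi_{e,n}^{(D-e,0)}$ via the summary table and the shift identity of \cite[Prop. 3.1]{J}, reduce to $D\geq 0$ using the $\sigma$-action (i.e. $\kappa$), and identify the multicharge shift $(D-e,0)\to(D,0)$ induced by $y_0$ with the map $\operatorname{Inv}_{D}$. The one step you flag as the ``main obstacle'' --- matching the pair-permutation recipe on shifted symbols with the crystal isomorphism coming from \cite{J} and \cite{JL} --- is precisely the content of \cite[Prop. 4.1]{J}, which the paper simply cites (it gives both the standardness of the elements of $\Phi_{e,n}^{(0,D)}$ and the fact that $\operatorname{Inv}_{D}$ carries them, up to $\kappa$, into $\Phi_{e,n}^{(0,D+e)}$), so no new symbol/abacus dictionary actually needs to be built.
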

  \begin{proof}
  Let $\gamma\in \mathcal{F}$. By Thm \ref{mainB}, there exists   $D\in \mathbb{Z}$ such that $\mathcal{B}_e^{\gamma}  =\Phi_{e,n}^{(D-e,0)}$.
   Since we know that   $\sigma.\mathcal{B}_e^{\delta}=\kappa (\mathcal{B}_e^{\delta})$ for all $\delta\in \mathcal{F}$,  one can assume that $D\geq 0$. 
  
  Let $(\lambda^0,\lambda^1)\in  \mathcal{B}_e^{\gamma}=\Phi_{e,n}^{(D-e,0)}$. Then by  \cite[Prop. 3.1]{J}, we have 
  $(\lambda^1,\lambda^0)\in\Phi_{e,n}^{(0,D)}$. Using \cite[Prop. 4.1]{J},  we deduce that 
   $(\lambda^0,\lambda^1)\in \textrm{Std}(D)$. Again, by    \cite[Prop. 4.1]{J}, we have $\kappa(\operatorname{Inv}_{D}(  \lambda^0,\lambda^1))\in \Phi_{e,n}^{(0,D+e)}$ which implies 
  that  $\operatorname{Inv}_{D}(  \lambda^0,\lambda^1)\in  \Phi_{e,n}^{(D,0)}$. The map sending $(\lambda^0,\lambda^1)\in \Phi_{e,n}^{(D-e,0)}$
   to  $ \operatorname{Inv}_{D}(  \lambda^0,\lambda^1)\in \Phi_{e,n}^{(D,0)}    $ is a bijection. 
  \end{proof}

  Hence, remarkably, this action does not depend on $e$ but only on $D$  ! This will be developed in the following section.
\begin{Rem}
Assume that $b/a=-b/a+e$ then $2b=ae$ and we have  $q_0^{2b}=1$. Then we have two cases to consider
\begin{itemize}
\item if $q_0^b=1$ then $q_0^{ad}=-1$ implies that $e$ is even and $d=e/2$ which is impossible 
 because then $q_0^{b}=q_0^{ae/2}=-q_0^{ae/2}$.
\item if $q_0^b=-1$ then $q_0^{ad}=1$ implies $d=e$ and $d =0$.
\end{itemize} 
In this case, note that $\Phi_{e,n}^{(d,0)} =\Phi_{e,n}^{(0,d)} $ and then 
 $\mathcal{B}_e^{b/a} =\mathcal{B}_e^{-b/a+e}$. Hence the above result is coherent with this case.

\end{Rem}

\subsection{Factorization of the decomposition map}  The aim 
 of this section is to  give an interpretation of  Prop. \ref{cont} in terms of constructible representations. 
  We here keep the notations of this proposition.

Let $\mathcal{H}(W,S,\{Q,q^{a}\})$ be the generic Hecke algebra with parameters $Q$ and $q^a$ (where $Q$ and $q$ are indeterminates). We consider a first specialization:
$$\theta_q:\mathbb{Z}[Q^{\pm 1/2},q^{\pm 1/2}]\to \mathbb{Q}(q^{\frac{1}{2}})$$
We obtain a well defined decomposition map:
$$d_{\theta_q}:R (\mathcal{H}_{\mathbb{Q}(Q^{1/2},q^{1/2})}(W,S,\{Q,q^{a}\}))\to R(\mathcal{H}_{\mathbb{Q}(q^{1/2})}(W,S,\{-q^{D.a},q^{a}\}) )$$
and an associated decomposition matrix $D_{\theta_q}$. 
We also have a specialization
$$\theta:A\to k$$
We obtain a well defined decomposition map:
$$d_{\theta}:R (\mathcal{H}_{\mathbb{Q}(q^{1/2})}(W,S,\{-q^{D.a},q^{a}\}))\to R(\mathcal{H}_{\mathbb{Q}(q^{1/2}_0)}(W,S,\{-q_0^{D.a},q_0^{a}  \}) )$$
and an associated decomposition matrix $D_{\theta}$.
On the other hand, one can also define a specialization 
$$\theta_1:\mathbb{Z}[Q^{\pm 1/2},q^{\pm 1/2}]\to \mathbb{Q}(q^{1/2}_0)$$
We obtain a well defined decomposition map:
$$d_{\theta_1}:R (\mathcal{H}_{Q^{\pm 1/2},q^{\pm 1/2}}(W,S,\{Q,q^{a}\}))\to R(\mathcal{H}_{\mathbb{Q}(q^{1/2}_0)}(W,S,\{-q_0^{D.a},q_0^{a}  \}) )$$
and an associated decomposition matrix $D_{\theta_1}$. 
\begin{Th}[Geck \cite{Gfact}, \cite{G2fact} Geck-Rouquier \cite{GRfact}]
The following diagram is commutative

\xymatrix @R=1cm @C=0,001cm{ R (\mathcal{H}_{\mathbb{Q}(q^{1/2},Q^{1/2})}(W,S,\{Q,q^{a}\}))  \ar[rr]^{d_{\theta_1}} \ar[rd]^{d_{\theta_q}} && R(\mathcal{H}_{\mathbb{Q}(q^{1/2}_0)}(W,S,\{-q_0^{Da},q_0^{a}\})) \\ & R (\mathcal{H}_{\mathbb{Q}(q^{1/2})}(W,S,\{-q^{Da},q^{a}  \}) ) \ar[ru]^{d_{\theta}}  }

In other words, we have:
$$D_{\theta_1}=D_{\theta_q}D_{\theta}$$

\end{Th}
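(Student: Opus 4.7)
The plan is to reduce the statement to the general factorization principle for decomposition maps over regular local rings of Krull dimension at most $2$, applied to the coefficient ring $R=\mathbb{Z}[Q^{\pm 1/2},q^{\pm 1/2}]$. The key observation is that $\theta_1$ coincides with $\theta\circ\theta_q$ in the sense of prime ideals: the maximal ideal $\mathfrak{m}=\ker(\theta_1)\subset R$ contains the height-one prime $\mathfrak{p}_q=\ker(\theta_q)$, and the quotient $R/\mathfrak{p}_q$ is canonically identified with $A=\mathbb{Z}[q^{\pm 1/2}]$ so that $\mathfrak{m}/\mathfrak{p}_q$ corresponds to $\ker(\theta)$. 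The localization $R_{\mathfrak{m}}$ is then a regular local ring of Krull dimension~$2$, which is the substantive input that makes the two-step reduction work.

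The second step is to invoke the Du--Parshall--Scott lemma (the same tool already used in \S\ref{int}) to realize every simple module $V^{\blambda}$ of the generic algebra $\mathcal{H}_K(W,S,\{Q,q^a\})$ (with $K=\operatorname{Frac}(R)$) by a matrix representation $\rho^{\blambda}\colon \mathcal{H}_K(W,S,\{Q,q^a\})\to M_{d_{\blambda}}(K)$ whose entries all lie in $R_{\mathfrak{m}}$. Such a common integral model exists precisely because $R_{\mathfrak{m}}$ is regular of Krull dimension $\leq 2$.

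With these integral models in hand, one can reduce $\rho^{\blambda}$ first modulo $\mathfrak{p}_q R_{\mathfrak{m}}$, obtaining a representation $\rho^{\blambda}_q$ of $\mathcal{H}_{\mathbb{Q}(q^{1/2})}(W,S,\{-q^{Da},q^a\})$ with entries in the discrete valuation ring $(R/\mathfrak{p}_q)_{\mathfrak{m}/\mathfrak{p}_q}\cong A_{\ker(\theta)}$; by definition the composition multiplicities of $\rho^{\blambda}_q$ fill in the row of $D_{\theta_q}$ indexed by $\blambda$. A further reduction of $\rho^{\blambda}_q$ modulo $\mathfrak{m}/\mathfrak{p}_q$ produces a representation of $\mathcal{H}_{\mathbb{Q}(q_0^{1/2})}(W,S,\{-q_0^{Da},q_0^a\})$ which, by construction, coincides with the representation $\rho^{\blambda}_{\mathbb{Q}(q_0^{1/2}),q_0}$ obtained from the single specialization $\theta_1=\theta\circ\theta_q$ of $\rho^{\blambda}$. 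Counting Jordan--H\"older multiplicities on this final module in two different ways --- directly (giving the $\blambda$-th row of $D_{\theta_1}$), or by first listing the intermediate simple constituents with their multiplicities from $D_{\theta_q}$ and then decomposing each of them according to $D_{\theta}$ --- yields $D_{\theta_1}=D_{\theta_q}D_{\theta}$ by additivity of composition-factor multiplicities in the Grothendieck group.

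The main obstacle is the simultaneous existence of an integral model over $R_{\mathfrak{m}}$ for every simple $V^{\blambda}$ together with the well-definedness of the intermediate decomposition numbers: both require the regularity of $R_{\mathfrak{m}}$ and the Krull dimension bound, provided by \cite[\S 1.1]{DPS1}. Once that is granted, transitivity of Jordan--H\"older multiplicities through a two-step filtration makes the matrix identity a formal consequence.
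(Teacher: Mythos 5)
Your argument is correct and coincides with the standard proof in the sources the paper cites for this theorem (the paper itself gives no proof, only the references \cite{Gfact}, \cite{G2fact}, \cite{GRfact}): the two-step reduction through a lattice over the localization of $\mathbb{Z}[Q^{\pm 1/2},q^{\pm 1/2}]$ at $\ker(\theta_1)$, resting on the Du--Parshall--Scott realization \cite[\S 1.1]{DPS1} and transitivity of composition multiplicities, is exactly the factorization argument used there. The only slip is terminological: $\ker(\theta_1)$ need not be a maximal ideal (its residue ring is $\mathbb{Z}[q_0^{\pm 1/2}]$, not a field); it is a prime of height $\le 2$ containing $\ker(\theta_q)$, which is all the argument requires.
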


The following gives a first consequence of this result which can be also easily checked using the results of the previous sections 
 and the properties of Uglov biartitions.
    \begin{Cor} We have:
    $$\mathcal{B}_e^{\gamma}\subset \mathcal{B}_{\infty}^{\gamma}   .$$
    \end{Cor}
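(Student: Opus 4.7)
My plan is to exploit the Geck--Rouquier factorization theorem stated just above, which gives $D_{\theta_1} = D_{\theta_q}\, D_\theta$. By Ariki's theorem combined with the Leclerc--Miyachi interpretation (Prop.~\ref{cons}), the first factor $D_{\theta_q}$ coincides with the constructible matrix $D_{\textrm{cons}}^\phi$ for the choice $D$ governing the factorization. Consequently, Theorem~\ref{conB} directly supplies a basic set $\mathcal{B}_\infty^\gamma$ for $D_{\theta_q}$ with respect to the same $a$-function $a^\gamma$ that governs $\mathcal{B}_e^\gamma$; the hypothesis $\gamma\neq D$ of that theorem is guaranteed by $b/a\not\equiv d\pmod{e}$, which is built into the setup of Theorem~\ref{mainB}.

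Fix $M\in\operatorname{Irr}(\mathcal{H}_{\mathbb{Q}(q_0^{1/2})}(W,S,\{-q_0^{Da},q_0^a\}))$ and let $\blambda_M\in \mathcal{B}_e^\gamma$ be its canonical label. Entrywise, the product $D_{\theta_1}=D_{\theta_q}D_\theta$ reads $[V^{\blambda}:M]_{\theta_1} = \sum_{U} [V^{\blambda}:U]_{\theta_q}\, [U:M]_\theta$ for every $\blambda\in \Pi^2_n$. Applied to $\blambda=\blambda_M$, the equality $[V^{\blambda_M}:M]_{\theta_1}=1$ forces the existence of at least one constructible representation $U$ (a column of $D_{\theta_q}$) with $[V^{\blambda_M}:U]_{\theta_q}\neq 0$ and $[U:M]_\theta\neq 0$. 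Let $\blambda_U\in\mathcal{B}_\infty^\gamma$ be the basic-set label of $U$ provided by Theorem~\ref{conB}. On the one hand, since $\blambda_M$ has nonzero entry in the column of $U$ in $D_{\theta_q}$, the basic-set property of $\mathcal{B}_\infty^\gamma$ yields $a^\gamma(\blambda_U)\leq a^\gamma(\blambda_M)$, with strict inequality unless $\blambda_U=\blambda_M$. On the other hand, $[V^{\blambda_U}:U]_{\theta_q}=1$ together with $[U:M]_\theta\neq 0$ imply, via the product formula, $[V^{\blambda_U}:M]_{\theta_1}\neq 0$, so the basic-set property of $\mathcal{B}_e^\gamma$ yields $a^\gamma(\blambda_U)\geq a^\gamma(\blambda_M)$, again with strict inequality unless $\blambda_U=\blambda_M$. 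Combining the two forces $\blambda_U=\blambda_M$, whence $\blambda_M\in\mathcal{B}_\infty^\gamma$. Since $M$ was arbitrary, $\mathcal{B}_e^\gamma\subset\mathcal{B}_\infty^\gamma$.

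The argument is essentially formal: it is the general principle that basic sets propagate through factorizations of decomposition matrices, applied here to Geck--Rouquier. No symbol combinatorics or Uglov-bipartition input is needed, so I do not expect a serious obstacle; the only mild subtlety is verifying that the $a$-function $a^\gamma$ is genuinely the common control function on both sides, and this is precisely what Theorems~\ref{mainB} and~\ref{conB} are arranged to provide.
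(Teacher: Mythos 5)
Your proposal is correct and follows exactly the route the paper intends: the paper's proof is the single line ``this follows directly from the above theorem,'' meaning the Geck--Rouquier factorization $D_{\theta_1}=D_{\theta_q}D_{\theta}$, and your argument is just the standard propagation of unitriangularity through that factorization, using that $D_{\theta_q}$ is the constructible matrix with basic set $\mathcal{B}_{\infty}^{\gamma}$ (Theorem~\ref{conB}) and $D_{\theta_1}$ has basic set $\mathcal{B}_e^{\gamma}$, both controlled by $a^{\gamma}$. You have merely written out the details the paper leaves implicit (nonnegativity of the entries and the two opposite $a$-function inequalities forcing $\blambda_U=\blambda_M$), so there is nothing to object to.
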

\begin{proof}
This follows directly from the above theorem.

\end{proof}
Proposition \ref{cont} and the above result show that the bijection between 
$y_0 .  \mathcal{B}_e^{\gamma}$ and $ \mathcal{B}_e^{\gamma}$ is ``controlled" by the matrix of the
 constructible representations through the above factorization
 in the following sense.
 
Note that $\gamma< D<\gamma+e$.  We have a bijection
$$\Psi:\mathcal{B}_{\infty}^{\gamma}\to  \mathcal{B}_{\infty}^{\gamma+e} $$
which is naturally defined using $D_{\theta_q}$.  Consider a constructible character and the expansion
 of it in the standard basis. This is given by a column of $D_{\theta_q}$. Let $\blambda$ be the element appearing 
 in this column with non zero coefficient and with minimal value with respect to $a^{\gamma}$. Then 
$ \blambda\in\mathcal{B}_{\infty}^{\gamma} $. Let $\bmu$ be the element appearing 
 in this column with non zero coefficient and with minimal value with respect to $a^{\gamma+e}$. Then 
$ \bmu\in\mathcal{B}_{\infty}^{\gamma+e} $. We then set $\Psi (\blambda)=\bmu$. 
Combining this with the above result we get 
$$\Psi(\mathcal{B}_{e}^{\gamma})=   \mathcal{B}_e^{\gamma+e}  .$$


\end{document}